\begin{document}

\title{Stable Well-posedness and  Tilt stability  with respect to admissible functions\thanks{This research was supported by the
National Natural Science Foundation of People's Republic of China (Grant No. 11371312) and IRTSTY.}}

\author{xi yin Zheng\thanks{Department of Mathematics, Yunnan University, Kunming 650091, People's Republic of China (xyzheng@ynu.edu.cn, jiangxingzhu@yahoo.com).}\and Jiangxing Zhu$^\dag$}

\maketitle

\begin{abstract}
Note that the well-posedness of a proper lower semicontinuous function $f$ can be equivalently described using an admissible function. In the case  when the objective function $f$  undergos the tilt perturbations in the sense of Poliquin and Rockafellar,  adopting  admissible functions $\varphi$ and $\psi$, this paper introduces and studies the stable well-posedness  of $f$ with respect to $\varphi$ (in breif, $\varphi$-SLWP) and  tilt-stable local minimum of $f$ with respect to $\psi$ (in brief, $\psi$-TSLM). In the special case when $\varphi(t)=t^2$ and $\psi(t)=t$, the corresponding $\varphi$-SLWP and $\psi$-TSLM reduce to the stable second order local minimizer and tilt stable local minimum respectively, which have been extensively studied in recent years. We discover an interesting relationship between two admissible functions $\varphi$ and $\psi$: $\psi(t)=(\varphi')^{-1}(t)$, which implies that a proper lower semicontinous function $f$ on a Banach space has $\varphi$-SLWP if and only if  $f$ has  $\psi$-TSLM. Using the techniques of variational analysis and conjugate analysis, we also prove that the strong  metric $\varphi'$-regularity of $\partial f$ is a sufficient condition for  $f$ to have $\varphi$-SLWP  and that the strong metric $\varphi'$-regularity of $\partial\overline{\rm co}(f+\delta_{B[\bar x,r]})$ for some $r>0$ is a necessary condition for  $f$ to have $\varphi$-SLWP.  In the special case when $\varphi(t)=t^2$, our results cover some existing main results  on the tilt stability.
\end{abstract}

\begin{keywords}
Stable well-posedness,  tilt stability, metric regularity, subdifferential
\end{keywords}

\begin{AMS}
90C31, 49K40, 49J52
\end{AMS}

\pagestyle{myheadings}
\thispagestyle{plain}
\markboth{XI YIN ZHENG AND JIANGXING ZHU}{Stable well-posedness and tilt stability}

\section{Introduction}

Well-posedness is a fundamental notion in variational analysis and optimization theory and has been well studied (cf. \cite{wellposed optimization problems,HY,L,R,yao and zheng 2014} and the references therein). Let $f$ be a proper lower semicontinuous function on a Banach space $X$ and recall that $f$ is well-posed at $\bar x\in{\rm dom}(f)$ (in the Tykhonov sense) if every minimizing sequence $\{x_n\}$ of $f$ converges to $\bar x$. Clearly, the  well-posedness of $f$ at $\bar x$ implies that
$\mathop{\arg\min}_{x\in X} f(x)=\{\bar x\}$. In the  case that $\mathop{\arg\min}_{x\in X} f(x)$ is not a singleton, we can adopt the following weak (or generalized) well-posedness:
$$d\big(x_n,\mathop{\arg\min}_{z\in X} f(z)\big):=\inf\big\{\|x_n-x\|:\;x\in \mathop{\arg\min}_{z\in X} f(z)\big\}\rightarrow0$$
for every minimizing sequence $\{x_n\}$ of $f$. We note that  well-conditionedness, Levitin-Polyak well-posedness, Hadamard well-posedness and other concepts are closely related or essentially equivalent to the above well-posedness and weak well-posedness (cf. \cite{AW, wellposed optimization problems,L}). Recall that  $\varphi:\mathbb{R}_+\rightarrow\mathbb{R}_+$ is an  admissible function if it is a nondecreasing function such that $\varphi(0)=0$ and  $[\varphi(t)\rightarrow0\Rightarrow t\rightarrow0]$. Some authors named  an admissible function as a forcing function, conditioning function and so on (cf. \cite{AW, bedn04, wellposed optimization problems}). It is known (cf.\cite[P6, Theorem 12]{wellposed optimization problems}) that $f$ is well-posed  at $\bar x$  if and only if  there exists an admissible function $\varphi:\mathbb{R}_+\rightarrow\mathbb{R}_+$ such that
$$\varphi(\|x-\bar x\|)\leq f(x)-f(\bar x)\quad\forall x\in X;\leqno{\rm (WP)}$$
while $f$ has weak well-posedness at $\bar x$ if and only if there exists an admissible function $\varphi:\mathbb{R}_+\rightarrow\mathbb{R}_+$ such that
$$\varphi(d(x,\mathop{\arg\min}_{x\in X} f(x)))\leq f(x)-f(\bar x)\quad\forall x\in X.\leqno{\rm (GWP)}$$
Replacing the entire space $X$ with some open ball $B_X(\bar x,r)$, one can consider the following respective localization of (WP) and (GWP):
$$\varphi(\|x-\bar x\|)\leq f(x)-f(\bar x)\quad\forall x\in B_X(\bar x,r)\leqno{\rm (LWP)}$$
and
$$\varphi\left(d\left(x,\mathop{\arg\min}_{x\in B_X(\bar x,r)} f(x)\right)\right)\leq f(x)-f(\bar x)\quad\forall x\in B(\bar x,r). \leqno{\rm (LGWP)}$$
In Attouch and Wets \cite{AW}, $\bar x$ is called a $\varphi$-minimizer of $f$ if (LWP) holds.
In the case that $\varphi(t)=ct$ with $c$ being a positive constant, (LWP) and (LGWP) reduce respectively to Polyak's sharp minimizer and Ferris' weak sharp minimizer which have been extensively studied (cf. \cite{BF,ferris,SW,CZ,zheng nonlinear 2014,ZY}).  In the case that $\varphi(t)=ct^2$, (LWP) means that $\bar x$ is a second-order local minimizer of $f$.
When $f$ undergoes tilt perturbations, under the name of ``uniform second-order growth condition", Bonnans and Shapiro \cite{BS} essentially introduced the following notion:  {\it $\bar x$ is said to be  a stable second order local  minimizer of $f$ if there exist $\kappa\in(0,\;+\infty)$ and neighborhoods $U^*$ of 0 and $U$ of $\bar x$ such that for every $u^*\in U^*$ there exists $x_{u^*}\in U$, with $x_0=\bar x$, satisfying
\begin{equation}\label{1.3}
\kappa\|x-x_{u^*}\|^2\leq f_{u^*}(x)-f_{u^*}(x_{u^*})\quad\forall x\in U,
\end{equation}
where $f_{u^*}:=f-u^*$.} In an earlier paper than \cite{BS}, Poliquin and Rockafellar \cite{poliquin and rockafellar} first introduced and studied another kind of   stability with respect to tilt perturbations: {\it $f$ is said to give a tilt-stable local minimum at $\bar x$ if $f(\bar x)$ is finite and there exist $\delta,r,L\in(0,\;+\infty)$ and $M:B_{X^*}(0,\delta)\rightarrow B_X(\bar x,r)$ with $M(0)=\bar x$ such that
\begin{equation}\label{1.1}
M(u^*)\in\mathop{\arg\min}_{x\in B_X(\bar x,r)}f_{u^*}(x)
\end{equation}
and}
\begin{equation}\label{1.2}
\|M(u_1^*)-M(u_2^*)\|\leq L\|u_1^*-u_2^*\|\quad\forall u_1^*,u_2^*\in B_{X^*}(0,\delta).
\end{equation}
In this paper, using admissible functions, we introduce and  study the following more general stability with respect to tilt perturbations.
\begin{definition}
{\it Given two admissible functions $\varphi,\psi:\mathbb{R}_+\rightarrow\mathbb{R}_+$ and a proper lower semicontinuous function $f$ on a Banach space $X$, we say that}\\
(i) {\it $f$ has  stable local well-posedness at $\bar x\in{\rm dom}(f)$ with respect to $\varphi$ (in brief, $\varphi$-SLWP) if
there exist $\delta,r,\tau,\kappa\in(0,\;+\infty)$ such that for every $u^*\in B_{X^*}(0,\delta)$ there exists $x_{u^*}\in B_X[\bar x,r]$, with $x_0=\bar x$, satisfying
\begin{equation}\label{1.5}
\varphi(\kappa\|x-x_{u^*}\|)\leq\tau(f_{u^*}(x)-f_{u^*}(x_{u^*}))\quad\forall x\in B_X[\bar x,r],
\end{equation}
where $B_X[\bar x,r]$ denote the closed ball of $X$ with center $\bar x$ and radius $r$.}\\
(ii) {\it $f$ is said to have a $\psi$-tilt-stable local minimum  at $\bar x$ (in brief, $\psi$-TSLM) if  there exist $\delta,r,\kappa,\tau\in(0,\;+\infty)$ and $M:B_{X^*}(0,\delta)\rightarrow B_X[\bar x,r]$ with $M(0)=\bar x$ such that (\ref{1.1}) holds and}
\begin{equation}\label{1.4}
\kappa\|M(u_1^*)-M(u_2^*)\|\leq\psi(\tau\|u_1^*-u_2^*\|)\quad\forall u_1^*,u_2^*\in B_{X^*}(0,\delta).
\end{equation}
\end{definition}
In the special case when $\varphi(t)= t^2$ and $\psi(t)=t$, the corresponding $\varphi$-{\it SLWP} and $\psi$-{\it TSLM} reduce to the stable second order local minimizer and tilt-stable local minimum, respectively. Many authors have studied the tilt-stable local minimum and  stable second order local minimizer. In 1998, Poliquin and Rockafellar \cite{poliquin and rockafellar} proved that if a proper lower semicontinuous function $f$ on $\mathbb{R}^n$ is prox-regular and subdifferentially continuous at $(\bar x,0)$ then $f$ gives a tilt stable minimum at $\bar x$ if and only if the second subdifferential $\partial^2f(\bar x,0)$ is positively definite. In 2008, under the convexity assumption of $f$, Arag\'{o}n Artacho and  Geoffroy \cite{aragon 2008} first studied the stable second order local minimizer of $f$ in terms of  the subdifferential mapping $\partial f$ and proved that $\bar x\in{\rm dom}(f)$ is a stable second order local  minimizer of $f$ if and only if  $\partial f$ is strongly metrically regular at $(\bar x, 0)$. In 2013, under the finite dimension assumption, Drusvyatskiy and Lewis \cite{DL}  extended Arag\'{o}n Artacho and  Geoffroy's result to the prox-regularity and subdifferential continuity case. Recently, these works have been pushed  by Drusvyatskiy,  Mordukhovich,  Nghia and  Outrata (cf.\cite{DMN, MN1, MN2, MN3, MO}). Zheng and Ng \cite{zheng siam 2014} further considered the H\"{o}lder tilt stability and the stable H\"{o}lder  local minimizer. This paper will consider the corresponding issues for $\psi$-{\it TSLM} and $\varphi$-{\it SLWP}.

To study $\varphi$-{\it SLWP} in terms of subdifferential mappings, we adopt  the following extension of the metric regularity.
\begin{definition}
Let $\psi:\mathbb{R}_+\rightarrow\mathbb{R}_+$ be an admissible  function and let $F$ be a multifunction between Banach spaces $X$ and $Y$ with $(\bar x,\bar y)\in{\rm gph}(F):=\{(x,y)\in  X\times Y:\;y\in F(x)\}$.\\
(i) $F$ is said to be metrically $\psi$-regular  at $(\bar x,\bar y)$ if there exist $r,\tau,\kappa\in(0,\;+\infty)$ such that
\begin{equation}\label{1.6}
\psi(\tau d(x,F^{-1}(y)))\leq \kappa d(y,F(x))\quad\forall (x,y)\in B_X(\bar x,r)\times B_Y(\bar y,r).
\end{equation}
(ii) $F$ is said to be strongly metrically $\psi$-regular at $(\bar x,\bar y)$ with respect to $\psi$ if  there exist $r,\tau,\kappa,\delta\in(0,\;+\infty)$ such that (\ref{1.6}) holds and $F^{-1}(y)\cap B_X(\bar x,\delta)$ is a singleton for all $y\in B_Y(\bar y,r)$.
\end{definition}\\
In the case when $\psi(t)=t$, the metric $\psi$-regularity is just the  metric regularity, which is a fundamental notion in variational analysis and well studied (cf. \cite{BS,DR,I,M,S,Zn3,ZN} and the references therein). When $\psi(t)=t^p$ with $p\in(0,\;+\infty)$,
(\ref{1.6}) means the so-called H\"{o}lder metric regularity of $F$ at $(\bar x,\bar y)$ (cf. \cite{FH,zheng siam 2014}). In Section 3,  we prove that  $f$  has  $\varphi$-{\it SLWP} at $\bar x$  if $\partial f$ is strongly metrically $\varphi'$-regular at $(\bar x,0)$ and that $\partial\overline{\rm co}(f+\delta_{B_X[\bar x,r]})$ is strongly metrically $\varphi'$-regular at $(\bar x,0)$ for some $r>0$ if $f$  has  $\varphi$-{\it SLWP} at $\bar x$; the later seems to be new even in the case when $\varphi(t)=t^2$.  In particular, under the convexity assumption on $f$, $f$ has  $\varphi$-{\it SLWP} at $\bar x$  if and only if $\partial f$ is strongly metrically $\varphi'$-regular at $(\bar x,0)$.

On one hand, given any two admissible functions $\varphi$ and $\psi$, we cannot expect that  $\varphi$-{\it SLWP} and $\psi$-{\it TSLM} are relevant. On the other hand, corresponding to the special case when $\varphi(t)=t^2$ and $\psi(t)=t$, Drusvyatskiy and Lewis \cite{DL} did prove that  the stable second order local minimizer and tilt-stable local minimum  are equivalent. Thus,  it is natural to ask whether there exists an exact relationship between $\varphi$ and $\psi$ such that  $\varphi$-{\it SLWP} and $\psi$-{\it TSLM} are equivalent. In Section 4, we  find that the equality $\psi(t)=(\varphi')^{-1}(t)$ is such a relationship. In particular, under some mild assumption and with the help of some techniques used in \cite{DL, MN1, zheng siam 2014, zheng nonlinear 2014}, we prove  that a proper lower semicontinuous function $f$ on a Banach space has $\varphi$-{\it SLWP} at $\bar x$ if and only if $f$ has  $(\varphi')^{-1}$-{\it TSLM} at $\bar x$.

Note that every  small linear perturbation $f_{u^*}$ of $f$ has an isolated minimizer around $\bar x$ if  $f$ has $\varphi$-{\it SLWP}  at $\bar x$.  In Section 5, we consider the stable weak well-posedness for the non-isolated minimizer case and obtain some interesting results.

In Section 6, in terms of  `generalized positive definiteness' of the second subdifferential $\partial^2f$, we provide a sufficient
condition for the subdifferential mapping $\partial f$ to be metrically regular with respect to an admissible function, which results in a sufficient condition for $f$ to have stable well-posedness in the convexity setting.

\section{Preliminaries}
\setcounter{section}{2}  \setcounter{equation}{0}
Let $X$ be a Banach space with the topological dual $X^*$. We denote by $B_X$ and $B_{X^*}$ the closed unit balls of $X$ and $X^*$, respectively.  For a proper lower semicontinuous function $f:X\rightarrow \mathbb{R}\cup\{+\infty\}$, we denote by ${\rm dom}(f)$ the effective domain  of $f$, that is,
$$
\textrm{dom}(f):=\{x\in X:f(x)<+\infty\} .
$$
For $x\in {\rm dom}(f)$ and $h\in X$, let $f^\uparrow(x,h)$ denote the generalized directional derivative introduced  by Rockafellar (cf. \cite{clarke}); that is,
$$
f^\uparrow(x,h):= \lim\limits_{\varepsilon\downarrow 0}\limsup\limits_{u\stackrel{f}{\rightarrow}x,t\downarrow 0}
\inf\limits_{w\in h+\varepsilon B_X}\frac{f(u+tw)-f(u)}{t},
$$
where the expression $u\stackrel{f}{\rightarrow}x$ means that $u\rightarrow x\;{\rm and}\;f(u)\rightarrow f(x)$.
Let $\partial f(x)$ denote the Clarke-Rockafellar subdifferential of $f$ at $x$, that is,
$$
\partial f(x):=\{x^*\in X^*:\;\langle x^*,h\rangle \leq f^\uparrow(x,h)\;\;\;\forall h\in X\}.
$$
In the case when $f$ is locally Lipschitzian around $x$, $f^\uparrow(x,h)$ reduces to the Clarke directional derivative
$$
f^\circ(x,h):= \limsup\limits_{u\rightarrow x,t\downarrow 0}\frac{f(u+th)-f(u)}{t}\quad \forall h\in X.
$$
It is well known that if  $f$ is convex, then
$$
\partial f(x)=\{x^*\in X^*:\;\langle x^*,y-x\rangle\leq f(y)- f(x)\quad\forall y\in X\}\quad\forall x\in{\rm dom}(f).
$$

Recall that the conjugate function $f^*$ of $f$ is a weak$^*$-lower semicontinuous convex function  on $X^*$ such that
$$
f^*(u^*):=\sup\{\langle u^*,x\rangle-f(x):\;x\in X\}=-\inf\{f_{u^*}(x):\;x\in X\}\;\;\forall u^*\in X^*,
$$
where
\begin{equation}\label{2.1}
f_{u^*}(x):=f(x)-\langle u^*,x\rangle\quad\forall x\in X.
\end{equation}
It follows that ${\rm dom}(f^*)\not=\emptyset$ if and only  $f$ is bounded below by a continuous linear functional.
For $x^*\in X^*$ and $x\in X$, it is easy to verify  that
$$
f^*(x^*)=\langle x^*,x\rangle-f(x)\Longrightarrow x\in\partial f^*(x^*)
$$
In the case when $f$ is convex, it is well known (cf. \cite[P.88]{L}) that
$$
f=f^{**}\;\;{\rm and}\;\;x^*\in\partial f(x)\Longleftrightarrow x\in\partial f^*(x^*).
$$

Let $g:\mathbb{R}_+\rightarrow\mathbb{R}_+$ be a convex function. Then  the directional derivative
$$g_+'(t):=\lim\limits_{s\rightarrow0^+}\frac{g(t+s)-g(t)}{s}$$
always exists for all $t\in\mathbb{R}_+$, and $g_+'$ is nondecreasing on $\mathbb{R}_+$. It is known (cf. \cite[Theeorem 2.1.5]{CZ}) that $g_+'$ is increasing on $\mathbb{R}_+$ if and only if $g$ is strictly convex, namely
$$g(\lambda t_1+(1-\lambda)t_2)<\lambda g(t_1)+(1-\lambda)g(t_2)$$
for any $\lambda\in(0,\;1)$ and $t_1,t_2\in\mathbb{R}_+$ with $t_1\not=t_2$.
It is also known that the convex function $g$ is differentiable on $\mathbb{R}_+$ if and only if $g_+'$ is continuous on $\mathbb{R}_+$.

Recall that an admissible function $\varphi$ is a nondecreasing function $\varphi:\mathbb{R}_+\rightarrow\mathbb{R}_+$ such that $\varphi(0)=0$ and
\begin{equation}\label{2.1'}
\varphi(t)\rightarrow0\Rightarrow t\rightarrow0.
\end{equation}
If the admissible function $\varphi$ is convex, it is  easy from \cite[Theorem 2.1.5]{CZ} to verify that
\begin{equation}\label{3.1}
0<\varphi_+'(t_1)\leq\varphi_+'(t_2)\;\;{\rm for\;all}\;t_1,t_2\in(0,\;+\infty)\;{\rm with}\;t_1\leq t_2.
\end{equation}
For convenience, for each $\alpha\in(0,\;1)$, let
\begin{equation}\label{2.3}
\varphi_\alpha'(t):=\frac{1}{\alpha}\varphi_+'\left(\frac{t}{1-\alpha}\right)\quad\forall t\in\mathbb{R}_+.
\end{equation}

\section{Stable well-posedness}
In this section, let $\varphi:\mathbb{R}_+\rightarrow\mathbb{R}_+$ be a convex admissible function. The following lemma, established  in   \cite{yao and zheng 2014}, is very useful in the proof of the main result in this section.

\begin{lemma}
Let $g$ be a proper lower semicontinuous function on a Banach space $X$. Let $\bar x\in \mathop{\arg\min}_{z\in X} g(z)$, $\alpha\in(0,\;1)$ and $\delta\in(0,\;+\infty)$ be such that
$$
\varphi'_\alpha\big(d(x,\mathop{\arg\min}_{z\in X} g(z))\big)\leq d(0,\partial g(x))\quad\forall x\in B_X(\bar x,\delta)\setminus \mathop{\arg\min}_{z\in X} g(z)
$$
where $\varphi_\alpha'$ is as in (\ref{2.3}).
Then,
$$
\varphi(d(x,\mathop{\arg\min}_{z\in X} g(z)))\leq g(x)-g(\bar x)\quad\forall x\in B_X(\bar x,\frac{\delta}{1+\alpha}).
$$
\end{lemma}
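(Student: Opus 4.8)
The plan is to pass from the pointwise subgradient lower bound in the hypothesis to the global growth estimate by a single application of Ekeland's variational principle, invoking the convexity of $\varphi$ only at the final step. Write $S:=\mathop{\arg\min}_{z\in X}g(z)$ and note that, since $\bar x\in S$ and $g$ is lower semicontinuous, $S=\{z\in X:\;g(z)\le g(\bar x)\}$ is a closed sublevel set, so $d(x,S)>0$ whenever $x\notin S$. The inequality to be proved is trivial when $x\in S$ (both sides vanish) and when $g(x)=+\infty$ (the right-hand side is $+\infty$); hence I fix $x\in B_X(\bar x,\frac{\delta}{1+\alpha})$ with $x\notin S$ and $g(x)<+\infty$, and set $r:=d(x,S)\in(0,\;+\infty)$ and $\varepsilon:=g(x)-g(\bar x)\in(0,\;+\infty)$. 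The target is exactly $\varphi(r)\le\varepsilon$.

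Next I would apply Ekeland's variational principle to $g$ (which is bounded below by $g(\bar x)$) at the point $x$, with level $\varepsilon$ and radius $\lambda:=\alpha r$. This produces $y\in X$ with $\|y-x\|\le\alpha r$, $g(y)\le g(x)$, and such that $y$ is a global minimizer of $z\mapsto g(z)+\frac{\varepsilon}{\alpha r}\|z-y\|$. Because the norm term is globally Lipschitz, the Fermat rule together with the Clarke--Rockafellar sum rule gives $0\in\partial g(y)+\frac{\varepsilon}{\alpha r}B_{X^*}$, that is, $d(0,\partial g(y))\le\frac{\varepsilon}{\alpha r}$. The choice $\lambda=\alpha r$ is precisely what keeps $y$ inside the region where the hypothesis is available: since $r\le\|x-\bar x\|$ one has $\|y-\bar x\|\le\|y-x\|+\|x-\bar x\|\le(1+\alpha)\|x-\bar x\|<\delta$, and since $\bar x\in S$ one has $d(y,S)\ge d(x,S)-\|y-x\|\ge(1-\alpha)r>0$, so that $y\in B_X(\bar x,\delta)\setminus S$.

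Applying the hypothesis at $y$ and recalling the definition of $\varphi_\alpha'$, I obtain $\frac{1}{\alpha}\varphi_+'\bigl(\frac{d(y,S)}{1-\alpha}\bigr)\le d(0,\partial g(y))\le\frac{\varepsilon}{\alpha r}$; here the rescaling $\frac{1}{1-\alpha}$ built into $\varphi_\alpha'$ exactly absorbs the worst case $d(y,S)=(1-\alpha)r$, because then $\frac{d(y,S)}{1-\alpha}\ge r$ and the monotonicity (\ref{3.1}) of $\varphi_+'$ yields $\varphi_+'(r)\le\frac{\varepsilon}{r}$, i.e. $r\varphi_+'(r)\le\varepsilon$. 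The convexity of $\varphi$ enters only now: since $\varphi(0)=0$ and $\varphi_+'$ is nondecreasing, $\varphi(r)=\int_0^r\varphi_+'(s)\,ds\le r\varphi_+'(r)\le\varepsilon=g(x)-g(\bar x)$, which is the assertion. I expect the only genuine obstacle to be the parameter bookkeeping in the middle step: one must choose the Ekeland radius $\lambda$ so that $y$ simultaneously stays in $B_X(\bar x,\delta)$ and remains bounded away from $S$, and must recognise that the precise form of $\varphi_\alpha'$ is dictated by these two constraints together with the convexity estimate $\varphi(r)\le r\varphi_+'(r)$.
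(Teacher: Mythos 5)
Your proof is correct; note that the paper itself gives no proof of this lemma (it is quoted from \cite{yao and zheng 2014}), and your argument --- Ekeland's variational principle at $x$ with radius $\lambda=\alpha r$, the Clarke--Rockafellar sum rule yielding $d(0,\partial g(y))\le\varepsilon/(\alpha r)$, the location estimates $\|y-\bar x\|\le(1+\alpha)\|x-\bar x\|<\delta$ and $d(y,S)\ge(1-\alpha)r$, and the convexity inequality $\varphi(r)\le r\varphi_+'(r)$ --- is precisely the argument to which the constants $\frac{1}{\alpha}$ and $\frac{1}{1-\alpha}$ in $\varphi_\alpha'$ and the shrunken ball $B_X(\bar x,\frac{\delta}{1+\alpha})$ in the statement are calibrated. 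This is essentially the standard (cited) proof, with all the delicate steps (strictness of $\|y-\bar x\|<\delta$, nonemptiness of $\partial g(y)$, continuity of $\varphi$ at $0$ implicit in $\varphi(r)\le r\varphi_+'(r)$) handled correctly.
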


Let $g:X\rightarrow\mathbb{R}\cup\{+\infty\}$ be a proper lower semicontinuous function, $u\in{\rm dom}(g)$ and $\beta>0$.
For convenience, we adopt the following notation:
\begin{equation}\label{a3.1}
M_g(u,\beta):=\mathop{\arg\min}_{z\in B[u,\beta]} g(z).
\end{equation}
Applying Lemma 3.1 to  $g=f+\delta_{B_X[\bar x,r]}$ and $\varphi(t)=\frac{\alpha}{\tau\kappa(1-\alpha)}\psi(\tau (1-\alpha) t)$, we have the following lemma.
\begin{lemma}
Let $\psi$ be a convex admissible function and let $f$ be  a proper lower semicontinuous function on a Banach space $X$. Let $\bar x\in {\rm dom}(f)$ and $r>0$ be such that
$$f(\bar x)=\min\limits_{x\in B_X[\bar x,r]}f(x).$$
Suppose that there exist  $\tau,\kappa,\delta\in(0,\;+\infty)$ such that
$$
\psi'_+\big(\tau d(x,M_f(\bar x,r))\big)\leq \kappa d(0,\partial f(x))\quad\forall x\in B_X(\bar x,\delta)\setminus M_f(\bar x,r).
$$
Then, for any $\alpha\in(0,\;1)$,
$$
\psi\big(\tau(1-\alpha)d(x,M_f(\bar x,r))\big)\leq \frac{\tau\kappa(1-\alpha)}{\alpha}(f(x)-f(\bar x))\quad\forall x\in B_X(\bar x,\frac{\min\{\delta,r\}}{1+\alpha}).
$$
\end{lemma}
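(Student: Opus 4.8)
The plan is to deduce the statement directly from Lemma 3.1 by specializing that lemma to the function $g:=f+\delta_{B_X[\bar x,r]}$ and to the auxiliary function
\[
\phi(t):=\frac{\alpha}{\tau\kappa(1-\alpha)}\,\psi\big(\tau(1-\alpha)t\big),\qquad t\in\mathbb{R}_+.
\]
First I would record the elementary facts needed before Lemma 3.1 can be quoted. Since $\psi$ is a convex admissible function and $t\mapsto\tau(1-\alpha)t$ is a linear map with positive slope, $\phi$ is again a convex admissible function: it is nondecreasing, $\phi(0)=0$, and $\phi(t)\to0$ forces $\psi(\tau(1-\alpha)t)\to0$, hence $\tau(1-\alpha)t\to0$ and so $t\to0$. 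Moreover, because $g$ takes the value $+\infty$ off the closed ball and coincides with $f$ on it, the hypothesis $f(\bar x)=\min_{x\in B_X[\bar x,r]}f(x)$ gives $\bar x\in\mathop{\arg\min}_{z\in X}g(z)$ and, more precisely, $\mathop{\arg\min}_{z\in X}g(z)=M_f(\bar x,r)$.

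Next I would translate the first-order hypothesis. Applying the chain rule for one-sided derivatives of a convex function precomposed with an affine map, $\phi_+'(s)=\frac{\alpha}{\kappa}\,\psi_+'(\tau(1-\alpha)s)$, and therefore, by the defining relation of $\phi_\alpha'$ in (\ref{2.3}),
\[
\phi_\alpha'(t)=\frac{1}{\alpha}\phi_+'\Big(\frac{t}{1-\alpha}\Big)=\frac{1}{\kappa}\,\psi_+'(\tau t).
\]
Thus the standing assumption of Lemma 3.1, namely $\phi_\alpha'\big(d(x,\mathop{\arg\min}_{z\in X}g(z))\big)\le d(0,\partial g(x))$, is \emph{exactly} the inequality $\psi_+'\big(\tau d(x,M_f(\bar x,r))\big)\le\kappa\,d(0,\partial g(x))$. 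What remains is that the present lemma only supplies this inequality with $\partial f$ in place of $\partial g$.

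This matching is the one genuinely delicate point, and I would handle it by locality of the subdifferential. For every $x$ in the open ball $B_X(\bar x,r)=\mathrm{int}\,B_X[\bar x,r]$ the indicator $\delta_{B_X[\bar x,r]}$ vanishes on a neighborhood of $x$, so $g$ coincides with $f$ near $x$ and hence $\partial g(x)=\partial f(x)$; here I use that the Clarke--Rockafellar subdifferential depends only on the local behavior of the function through $f^\uparrow(x,\cdot)$. Consequently, restricting attention to $x\in B_X(\bar x,\delta')\setminus M_f(\bar x,r)$ with $\delta':=\min\{\delta,r\}$, the hypothesis of the present lemma yields $\psi_+'(\tau d(x,M_f(\bar x,r)))\le\kappa\,d(0,\partial f(x))=\kappa\,d(0,\partial g(x))$, which is precisely the assumption of Lemma 3.1 for the data $(g,\phi,\alpha,\delta')$.

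Finally I would invoke Lemma 3.1 to obtain $\phi\big(d(x,M_f(\bar x,r))\big)\le g(x)-g(\bar x)$ for all $x\in B_X\big(\bar x,\tfrac{\delta'}{1+\alpha}\big)$. On this smaller ball one has $x\in B_X(\bar x,r)$, so $g(x)=f(x)$ and $g(\bar x)=f(\bar x)$; substituting the definition of $\phi$ and multiplying through by $\frac{\tau\kappa(1-\alpha)}{\alpha}$ gives exactly the asserted estimate with radius $\frac{\min\{\delta,r\}}{1+\alpha}$. I expect no analytic obstacle beyond the bookkeeping of constants; the only place demanding care is the reduction from $\partial g$ to $\partial f$, which is what forces the effective radius to shrink from $\delta$ to $\min\{\delta,r\}$.
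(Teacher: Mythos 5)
Your proposal is correct and is essentially the paper's own argument: the paper obtains this lemma precisely by applying Lemma 3.1 to $g=f+\delta_{B_X[\bar x,r]}$ with the auxiliary function $\varphi(t)=\frac{\alpha}{\tau\kappa(1-\alpha)}\psi(\tau(1-\alpha)t)$, which is exactly your specialization. The details you supply --- the computation $\phi_\alpha'(t)=\frac{1}{\kappa}\psi_+'(\tau t)$, the identification $\mathop{\arg\min}_{z\in X}g(z)=M_f(\bar x,r)$, and the locality of the Clarke--Rockafellar subdifferential giving $\partial g(x)=\partial f(x)$ on $B_X(\bar x,r)$, which is what shrinks the radius to $\min\{\delta,r\}$ --- correctly fill in the bookkeeping the paper leaves implicit.
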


With the help of Lemma 3.2, we can prove the following sufficient condition for the stable well-posedness.

\begin{theorem}
\label{strong well}
Let $f$ be a proper lower semicontinuous function  on a Banach space $X$ and let  $\bar x\in {\rm dom} (f)$ be a local minimizer of $f$.  Suppose that $\partial f$ is strongly metrically $\varphi_+'$-regular at $(\bar x,0)$. Then $f$ has $\varphi$-{\it SLWP} at $\bar x$.
\end{theorem}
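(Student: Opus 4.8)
The goal is to show that strong metric $\varphi_+'$-regularity of $\partial f$ at $(\bar x,0)$ forces $\varphi$-\textit{SLWP} at $\bar x$. The natural route is to verify the differential inequality hypothesis of Lemma 3.2 uniformly for every tilted function $f_{u^*}$ with $u^*$ small, and then invoke that lemma to produce the quadratic-type lower bound (\ref{1.5}). So let me unpack what each object becomes under the tilt $f_{u^*}=f-\langle u^*,\cdot\rangle$.

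Let me think about the structure. Strong metric $\varphi_+'$-regularity at $(\bar x,0)$ gives constants $r,\tau,\kappa,\delta>0$ with
$$\varphi_+'\big(\tau\, d(x,(\partial f)^{-1}(y))\big)\leq\kappa\, d(y,\partial f(x))\quad\forall(x,y)\in B_X(\bar x,r)\times B_{X^*}(0,r),$$
together with single-valuedness of $(\partial f)^{-1}(y)\cap B_X(\bar x,\delta)$ for $y\in B_{X^*}(0,r)$. The key observation is that for the tilted function $\partial f_{u^*}(x)=\partial f(x)-u^*$, so that $0\in\partial f_{u^*}(x)$ exactly when $u^*\in\partial f(x)$, i.e. $x\in(\partial f)^{-1}(u^*)$. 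Thus the unique point $x_{u^*}:=(\partial f)^{-1}(u^*)\cap B_X(\bar x,\delta)$ is the natural candidate minimizer of $f_{u^*}$, with $x_0=\bar x$ since $\bar x$ is a local minimizer of $f$ (so $0\in\partial f(\bar x)$).

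**The sequence of steps.** First I would fix a small $\delta_0>0$ for the admissible radius of tilts and shrink $r$ so that all the regularity estimates and single-valuedness hold on $B_X(\bar x,r)\times B_{X^*}(0,\delta_0)$. Second, for each $u^*\in B_{X^*}(0,\delta_0)$ I would define $x_{u^*}$ as above and argue it is genuinely a local minimizer of $f_{u^*}$ over $B_X[\bar x,r]$; here single-valuedness of the localized inverse is what pins $x_{u^*}$ down uniquely and forces $x_{u^*}\to\bar x$ as $u^*\to0$, so that after shrinking $\delta_0$ we may assume $x_{u^*}\in B_X(\bar x,\delta/2)$. Third comes the crucial estimate: for $x\in B_X(\bar x,r)$, applying the regularity inequality with $y=u^*$ gives
$$\varphi_+'\big(\tau\,d(x,(\partial f)^{-1}(u^*))\big)\leq\kappa\, d(u^*,\partial f(x))=\kappa\, d(0,\partial f_{u^*}(x)).$$
Locally around $x_{u^*}$ the set $(\partial f)^{-1}(u^*)$ reduces to the singleton $\{x_{u^*}\}=M_{f_{u^*}}(\bar x,r)$, so this reads precisely as the hypothesis of Lemma 3.2 applied to the function $f_{u^*}$ at the base point $x_{u^*}$. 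Fourth, invoking Lemma 3.2 (with a fixed choice of $\alpha\in(0,1)$) yields
$$\psi\big(\tau(1-\alpha)\|x-x_{u^*}\|\big)\leq\frac{\tau\kappa(1-\alpha)}{\alpha}\big(f_{u^*}(x)-f_{u^*}(x_{u^*})\big)$$
on a slightly smaller ball, where $\psi=\varphi$; rewriting this with $\kappa':=\tau(1-\alpha)$ and $\tau':=\frac{\tau\kappa(1-\alpha)}{\alpha}$ is exactly the defining inequality (\ref{1.5}) of $\varphi$-\textit{SLWP}, uniformly in $u^*$. I would finish by collecting the uniform constants.

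**Where the difficulty concentrates.** The routine parts are the bookkeeping of radii and the algebraic matching of constants. The genuine obstacles are twofold. First, one must verify that $x_{u^*}$ is actually the argmin $M_{f_{u^*}}(\bar x,r)$ over the \emph{closed ball}, not merely a critical point: the subdifferential condition $0\in\partial f_{u^*}(x_{u^*})$ is only stationarity, so I expect to derive minimality as a \emph{consequence} of Lemma 3.2 itself (the lower bound it produces certifies $x_{u^*}$ as the unique minimizer), rather than assuming it a priori — this is a slightly delicate self-referential point that requires establishing the Lemma 3.2 hypothesis first and reading off minimality afterward, possibly with an auxiliary shrinking argument near the boundary. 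Second, the hypothesis of Lemma 3.2 demands the base point be a global minimizer of $f_{u^*}+\delta_{B_X[\bar x,r]}$; bridging the gap between the pointwise regularity estimate (valid on a neighborhood) and this global-over-the-ball requirement is the step where the admissibility and monotonicity of $\varphi_+'$, together with careful control of which points lie in the localized inverse, must be used to rule out competing minimizers. Matching the uniform smallness of $\delta_0$ so that all tilts keep $x_{u^*}$ inside the required inner ball is the last piece of this obstacle.
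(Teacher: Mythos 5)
Your outline follows the same strategy as the paper's own proof (define $x_{u^*}$ via the localized inverse of $\partial f$ at $u^*$, translate the regularity inequality into the differential hypothesis of Lemma 3.2 for the tilted function $f_{u^*}$, and invoke the lemma), but the step you flag as ``slightly delicate'' is in fact a genuine gap, and the fix you sketch is circular. Lemma 3.2 takes as a \emph{hypothesis} that the base point minimizes the function over the closed ball, and its differential hypothesis is phrased in terms of $d\bigl(x, M_{f_{u^*}}(\cdot,\cdot)\bigr)$, the distance to the argmin set; you cannot ``derive minimality as a consequence of Lemma 3.2 itself,'' because its conclusion is a growth estimate relative to an argmin set that you have not yet identified with $\{x_{u^*}\}$. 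Worse, in a general Banach space a proper lower semicontinuous $f_{u^*}$ need not attain its infimum over $B_X[\bar x,r]$ at all, so ``argue it is genuinely a local minimizer'' cannot be done by soft reasoning; the definition $x_{u^*}\in(\partial f)^{-1}(u^*)$ gives only stationarity $0\in\partial f_{u^*}(x_{u^*})$, which for nonconvex $f$ is strictly weaker than minimality.

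The paper closes this gap with an Ekeland argument that your proposal lacks. It takes a minimizing sequence $x_n$ for $f_{u^*}$ over $B_X[\bar x,4\eta]$, perturbs by Ekeland's variational principle to points $u_n$ satisfying $f_{u^*}(u_n)\le f_{u^*}(x)+n^{-1}\|x-u_n\|$ on the ball, and then --- the second essential device --- shows that $u_n$ lies in the \emph{open} ball for all large $n$, so that the fuzzy stationarity $0\in\partial f_{u^*}(u_n)+n^{-1}B_{X^*}$ holds without contamination from the normal cone of the ball. Interiority is itself nontrivial: the paper proves it by first applying Lemma 3.2 to the \emph{untilted} $f$ at $\bar x$ (legitimate, since $\bar x$ is a local minimizer of $f$ by hypothesis, so its argmin status is known there), obtaining the growth estimate $\varphi(\frac{\tau}{2}\|x-\bar x\|)\le\tau\kappa(f(x)-f(\bar x))$, and combining it with the Ekeland inequality and a carefully pre-chosen smallness bound on $\|u^*\|$ (the constant $\frac{2\varphi(2\tau\eta)}{9\tau\kappa\eta}$ built into $\delta_1$) to force $\limsup_n\varphi(\frac{\tau}{2}\|u_n-\bar x\|)\le\frac{8}{9}\varphi(2\tau\eta)<\varphi(2\tau\eta)$, hence $\|u_n-\bar x\|<4\eta$ eventually. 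With interiority in hand, the regularity inequality gives $\varphi_+'(\tau\|u_n-x_{u^*}\|)\le\kappa n^{-1}\to0$, so $x_n\to x_{u^*}$, which establishes simultaneously that the infimum is attained and that $M_{f_{u^*}}(x_{u^*},3\eta)=\{x_{u^*}\}$; only then can Lemma 3.2 be applied to $f_{u^*}$ as you intend. (A minor further slip: you attribute $x_{u^*}\to\bar x$ to single-valuedness alone, whereas it comes quantitatively from setting $x=\bar x$ in the regularity inequality.) Without the Ekeland-plus-interiority mechanism, or a substitute for it, your argument does not go through.
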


\begin{proof}
By the assumption, there exist $r,\gamma,\delta,\tau,\kappa\in(0,\;+\infty)$ such that
\begin{equation}
\min_{x\in B_X[\bar x, r]}f(x)=f(\bar x), \label{theo1}
\end{equation}
\begin{equation}\label{3.10'}
(\partial f)^{-1}(u^*)\cap B_X(\bar x,\gamma)=\{x_{u^*}\}\quad\forall u^*\in B_{X^*}(0,\delta)
\end{equation}
and
\begin{equation}
\;\;\varphi_+'(\tau d(x,(\partial f)^{-1}(u^*))) \leq \kappa d(u^*,\partial f(x))\quad \forall (x,u^*)\in B_X(\bar x, \delta)\times B_{X^*}(0,\delta). \label{theo2}
\end{equation}
Hence $x_0=\bar x$ and
$$
d(\bar x,(\partial  f)^{-1}(u^*))=d(\bar x,(\partial f)^{-1}(u^*)\cap B_X(\bar x,\gamma))=\|\bar x-x_{u^*}\|\quad\forall u^*\in B_{X^*}(0,\delta).
$$
Setting $x=\bar x$ in inequality (\ref{theo2}) and  noting that $0\in \partial f(\bar x)$, it follows that
$$
\varphi_+'(\tau\|\bar x-x_{u^*}\|) \leq \kappa d(u^*,\partial f(\bar x))\leq \kappa\|u^*\| \quad \forall u^*\in B_{X^*}(0,\delta).\label{theo7}
$$
Let
$$\eta:=\frac{\min\{\delta,r,\gamma\}}{16}\;\;{\rm and}\;\; \delta_1:=\min\left\{\delta,,r,\gamma,\frac{\varphi'_+(\tau\eta)}{\kappa},\frac{2\varphi(2\tau\eta)}{9\tau\kappa\eta}\right\}.$$
Then,
$$
\varphi_+'(\tau \|\bar x-x_{u^*}\|) \leq \kappa\|u^*\|< \varphi_+'(\tau\eta)\quad \forall u^*\in B_{X^*}(0,\delta_1)
$$
and so
\begin{equation}
\|x_{u^*}-\bar x\| <\eta\leq\frac{\gamma}{16}\quad \forall \ u^*\in B_{X^*}(0,\delta_1) \label{theo8}
\end{equation}
(because $\varphi_+'$ is nondecreasing). This and  the definition of $\eta$ imply that
\begin{equation}
B_X(x_{u^*}, 6\eta) \subset B_X(\bar x, 7\eta)\subset B_X(\bar x,\gamma)\cap B_X(\bar x,\delta)\quad\forall u^*\in B_{X^*}(0,\delta_1). \label{theo9}
\end{equation}
Thus, by (\ref{3.10'}), one has
$$(\partial f)^{-1}(u^*)\cap B(x_{u^*},6\eta)=\{x_{u^*}\}\quad\forall u^*\in B_{X^*}(0,\delta_1),$$
and so
$$d(x,(\partial f)^{-1}(u^*))=\|x-x_{u^*}\|\quad\forall u^*\in B_{X^*}(0, \delta_1)\;{\rm and}\;x\in B_X(x_{u^*}, 3\eta).$$
Noting that $\partial f_{u^*}(x)=\partial f(x)-u^*$, it follows from  (\ref{theo2}) and  (\ref{theo9}) that
\begin{equation}
\quad\varphi_+'(\tau\|x-x_{u^*}\|) \leq\kappa d(0,\partial f_{u^*}(x))\quad\forall u^*\in B_{X^*}(0, \delta_1)\;{\rm and}\;x\in B_X(x_{u^*}, 3\eta).\label{theo6}
\end{equation}
We claim that
\begin{equation}
M_{f_{u^*}}(x_{u^*},3\eta)=\{x_{u^*}\}\quad \forall u^*\in B_{X^*}(0,\delta_1),\label{theo11}
\end{equation}
where $M_{f_{u^*}}(x_{u^*},3\eta)$ is defined as in (\ref{a3.1}). Granting this, by (\ref{theo8}) and   (\ref{theo6}), we have that
\begin{equation}\label{3.2'na}
M_{f_{u^*}}(\bar x,\eta)=\{x_{u^*}\}\quad\forall u^*\in B_{X^*}(0,\delta_1)
\end{equation}
and
$$\varphi_+'\left(\tau d\big(x,M_{f_{u^*}}(x_{u^*},3\eta)\big)\right)\leq \kappa d(0,\partial f_{u^*}(x))$$
for all $u^*\in B_{X^*}(0, \delta_1)$ and $x\in B_X(x_{u^*}, 3\eta)$. Thus, by Lemma 3.2 (applied to $f=f_{u^*}$, $\bar x=x_{u^*}$, $r=3\eta$, $\delta=\delta_1$ and $\alpha=\frac{1}{2}$), one has
$$
\varphi \left(\frac{\tau}{2} \|x-x_{u^*}\|\right)=\varphi\left(\frac{\tau}{2}d\big(x,M_{f_{u^*}}(x_{u^*},3\eta)\big)\right)\leq \tau\kappa(f_{u^*}(x)-f_{u^*}(x_{u^*}))
$$
for all  $u^*\in B_{X^*}(0, \delta_1)$ and $x\in B_X(x_{u^*}, 2\eta)$.
Noting (by (\ref{theo8})) that
$x_{u^*}\in B_X[\bar x, \eta] \subset B_X(x_{u^*}, 2\eta),$
it follows that
$$\varphi (\frac{\tau}{2} \|x-x_{u^*}\|)\leq  \tau\kappa(f_{u^*}(x)-f_{u^*}(x_{u^*}))\quad\forall (x,u^*)\in B_X[\bar x,\eta]\times B_{X^*}(0,\delta_1).$$
This and (\ref{3.2'na}) imply that $f$ has $\varphi$-{\it SLWP} at $\bar x$.
It remains to show that (\ref{theo11}) holds. By (\ref{theo8}), one has   $B_X[x_{u^*}, 3\eta] \subset B_X(\bar x, 4\eta)$ for all $u^*\in B_{X^*}(0,\delta_1)$. Thus, to prove (\ref{theo11}),   we only need to show that
\begin{equation}
\{x_{u^*}\}=M_{f_{u^*}}(\bar x,4\eta)\quad \forall u^*\in B_{X^*}(0,\delta_1).\label{theo15}
\end{equation}
To do this, given a $u^*\in B_{X^*}(0,\delta_1)$,  for each $n\in \mathbb{N}$ take  $x_n\in B_X[\bar x, 4\eta]$  such that
\begin{equation}
f_{u^*}(x_n)<\inf_{x\in B_X[\bar x,  4\eta]}f_{u^*}(x)+n^{-2}.\label{theo16}
\end{equation}
It suffices to show that  $\|x_n-x_{u^*}\|\rightarrow0$. By Ekeland's variational principle and (\ref{theo16}), there exists $u_n\in B_X[\bar x,  4\eta]$ such that
\begin{equation}
\|u_n-x_n\| \leq n^{-1}\label{theo17}
\end{equation}
and
\begin{equation}
f_{u^*}(u_n) \leq f_{u^*}(x)+n^{-1}\|x-u_n\|\quad \forall x\in B_X[\bar x,  4\eta].\label{theo18}
\end{equation}
Thus, $\|u_n-\bar x\|\leq4\eta<\frac{\gamma}{3}$. It follows from (\ref{3.10'}) and (\ref{theo8}) that
\begin{equation}\label{3.11'na}
d(u_n,(\partial f)^{-1}(u^*))=\|u_n-x_{u^*}\|.
\end{equation}
We claim that $u_n$ lies in the open ball $B_X(\bar x, 4\eta)$ for all sufficiently large $n\in \mathbb{N}$. Granting this,  (\ref{theo18}) implies that
$$
0\in \partial f_{u^*}(u_n)+n^{-1}B_{X^*}
$$
for all sufficiently large $n$.
Since $\|u_n-\bar x\|\leq4\eta<\delta$, it follows from (\ref{theo2}) and (\ref{3.11'na}) that
$$
\varphi_+'(\tau\|u_n-x_{u^*}\|)\leq \kappa d(u^*,\partial f(u_n)) =\kappa d(0,\partial f_{u^*}(u_n))\leq\kappa n^{-1}
$$
for all sufficiently large $n$.
Thus, by  (\ref{3.1}), one has  $\|u_n-x_{u^*}\|\rightarrow 0$. This, together with (\ref{theo17}), shows that $\|x_n- x_{u^*}\|\rightarrow0$. Finally we prove that $u_n$ lies in the open ball $B_X(\bar x, 4\eta)$ for all sufficiently large $n\in \mathbb{N}$. Setting $u^*=0$ in (\ref{theo2}), one has
$$\varphi_+'(\tau d(x,(\partial f)^{-1}(0))) \leq\kappa d(0,\partial f(x))\quad \forall x\in B_X(\bar x, \delta).
$$
Letting $\delta_0:=\frac{1}{2}\min\{\delta,r,\gamma\}$ and noting (by (\ref{theo1}) and (\ref{3.10'}))  that
$$\{\bar x\}=(\partial f)^{-1}(0)\cap B_X(\bar x,2\delta_0)=M_f(\bar x,\delta_0),$$
it follows that
$$
\varphi_+'(\tau d(x,M_f(\bar x,\delta_0))) \leq \kappa d(0,\partial f(x))\quad \forall x\in B_X(\bar x, \delta_0).
$$
Thus, by Lemma 3.2 (applied to $\alpha=\frac{1}{2}$),
\begin{equation}
\varphi (\frac{\tau}{2}\|x-\bar x\|)\leq\tau\kappa(f(x)-f(\bar x))\quad \forall x \in B_X(\bar x, \frac{2\delta_0}{3}).\label{theo13}
\end{equation}
By the  definition of $\eta$, one has  $u_n\in B_X[\bar x, 4\eta] \subset B_X(\bar x, \frac{2\delta_0}{3})$. Given a $u^*\in B_{X^*}(0,\delta_1)$, it follows from (\ref{theo13}), (\ref{theo18}) and the choice of $\delta_1$ that
\begin{eqnarray*}
\varphi (\frac{\tau}{2}\|u_n-\bar x\|) &\leq&\tau\kappa(f(u_n)-f(\bar x))\\
&=&\tau\kappa(f_{u^*}(u_n)-f_{u^*}(\bar x)+\langle u^*,u_n-\bar x\rangle)\\
& \leq&\tau\kappa(\frac{1}{n}\|u_n-\bar x\|+\|u^*\|\|u_n-\bar x\|)\\
&\leq&\tau\kappa(n^{-1}+\delta_1)\|u_n-\bar x\|\\
&\leq& \tau\kappa\left(\frac{1}{n}+\frac{2\varphi(2\tau\eta)}{9\tau\kappa\eta}\right)4\eta
\end{eqnarray*}
and so
$$\limsup_{n\rightarrow +\infty} \varphi(\frac{\tau}{2}\|u_n-\bar x\|)\leq\frac{8\varphi(2\tau\eta)}{9} < \varphi\left(2\tau\eta\right).$$
Noting that $\varphi$ is nondecreasing, it follows that $\|u_n-\bar x\| <4\eta$ for all sufficiently large $n$.   The proof is complete.
\end{proof}

Even in the special case when $\varphi(t)=t^2$, the converse of Theorem 3.3 is not necessarily true (see \cite[Example 3.4]{DL}).
This and Theorem 3.3 make the following necessity result meaningful.

Let $g$ be a proper lower semicontinuous function on a Banach space $X$ such that $-\infty<\inf\limits_{x\in X}g(x)$, and let $\overline{\rm co}g$ denote the convex envelope of $g$, that is,
${\rm epi}(\overline{\rm co}g)=\overline{\rm co}({\rm epi}(g))$. Then, $\overline{\rm co}g$ is a proper lower semicontinuous convex function,
$$g^{**}=\overline{\rm co}g\;\;{\rm and}\;\;g^*=(\overline{\rm co}g)^*$$
where $g^*$ and $g^{**}$ denote respectively the conjugate function and twice conjugate function of $g$ (cf. \cite[Theorem 2.3.1]{CZ} and \cite[Theorem 2.3.4]{CZ}).

\begin{theorem}
Let $\varphi$ be a strictly convex differentiable admissible function and $f$ be a proper lower semicontinuous function  on a Banach space $X$. Suppose that $f$ has $\varphi$-{\it SLWP} at  $\bar x\in {\rm dom} (f)$. Then there exists $r>0$ such that $\partial\overline{\rm co}(f+\delta_{B_X[\bar x,r]})$ is strongly metrically $\varphi'$-regular at $(\bar x,0)$.
\end{theorem}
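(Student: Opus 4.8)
The plan is to move everything to the conjugate side, where the convexification becomes invisible, and to extract the required regularity from Fréchet differentiability of the conjugate together with a two-point monotonicity estimate furnished by $\varphi$-\emph{SLWP}. Write $g:=f+\delta_{B_X[\bar x,r]}$ and $\tilde g:=\overline{\rm co}\,g$ for the radius $r$ provided by $\varphi$-\emph{SLWP}, and recall from the remarks preceding the theorem that $g^*=\tilde g^*$ and $\tilde g=g^{**}$. For $u^*\in B_{X^*}(0,\delta)$ let $x_{u^*}$ be the point furnished by $\varphi$-\emph{SLWP}; since the constraint $x\in B_X[\bar x,r]$ is absorbed into $g$, the point $x_{u^*}$ minimizes $g_{u^*}$ over all of $X$, whence $g^*(u^*)=\langle u^*,x_{u^*}\rangle-f(x_{u^*})$ and, by the Fenchel--Young relation recalled in Section 2, $x_{u^*}\in\partial g^*(u^*)=\partial\tilde g^*(u^*)$, i.e.\ $x_{u^*}\in\partial\tilde g^{-1}(u^*)$; in particular $0\in\partial\tilde g(\bar x)$. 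Because $\varphi$ is admissible, $\varphi(t)>0$ for $t>0$, so (\ref{1.5}) forces $x_{u^*}$ to be the \emph{unique} minimizer of $g_{u^*}$.

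First I would record a two-point estimate. Applying (\ref{1.5}) to $u_1^*$ at $x=x_{u_2^*}$ and to $u_2^*$ at $x=x_{u_1^*}$ and adding, the linear terms recombine into an inner product and yield
\begin{equation}\label{nec-mono}
2\varphi(\kappa\|x_{u_1^*}-x_{u_2^*}\|)\leq\tau\langle u_1^*-u_2^*,\,x_{u_1^*}-x_{u_2^*}\rangle\leq\tau\|u_1^*-u_2^*\|\,\|x_{u_1^*}-x_{u_2^*}\|
\end{equation}
for all $u_1^*,u_2^*\in B_{X^*}(0,\delta)$. Admissibility of $\varphi$ then makes $u^*\mapsto x_{u^*}$ continuous with $x_{u^*}\to\bar x$ as $u^*\to0$, so $x_{u^*}\in B_X(\bar x,\delta')$ for small $u^*$.

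Next I would upgrade $x_{u^*}\in\partial g^*(u^*)$ to Fréchet differentiability of $h:=g^*=\tilde g^*$. Using $x_{u^*}\in\partial h(u^*)$ for a lower bound and the identity $h(u^*+v^*)=\langle u^*+v^*,x_{u^*+v^*}\rangle-g(x_{u^*+v^*})$ together with minimality of $x_{u^*}$ for $g_{u^*}$, one gets $0\leq h(u^*+v^*)-h(u^*)-\langle v^*,x_{u^*}\rangle\leq\|v^*\|\,\|x_{u^*+v^*}-x_{u^*}\|$; dividing by $\|v^*\|$ and using the continuity from (\ref{nec-mono}) shows that $\nabla h(u^*)=x_{u^*}$ exists in the Fréchet sense for every small $u^*$. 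Differentiability of the convex $h$ forces $\partial h(u^*)=\{x_{u^*}\}$, and through $\partial\tilde g^{-1}(u^*)=\partial\tilde g^*(u^*)=\partial h(u^*)$ this gives the single-valued localization $\partial\tilde g^{-1}(u^*)\cap B_X(\bar x,\delta')=\{x_{u^*}\}$ demanded by strong metric $\varphi'$-regularity, along with $d(x,\partial\tilde g^{-1}(u^*))=\|x-x_{u^*}\|$. For the regularity inequality I would fix $x$ near $\bar x$, $u^*$ near $0$ and pick $w^*\in\partial\tilde g(x)$ near $0$; then $x=x_{w^*}$, so (\ref{nec-mono}) reads $2\varphi(\kappa\|x-x_{u^*}\|)\leq\tau\|w^*-u^*\|\,\|x-x_{u^*}\|$. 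Convexity of $\varphi$ in the form $\varphi(\kappa t)\geq\varphi'(\tfrac{\kappa t}{2})\tfrac{\kappa t}{2}$, cancellation of $t=\|x-x_{u^*}\|$, and the infimum over $w^*$ then produce
\begin{equation}\label{nec-reg}
\varphi'\Big(\tfrac{\kappa}{2}\,d(x,\partial\tilde g^{-1}(u^*))\Big)\leq\tfrac{\tau}{\kappa}\,d(u^*,\partial\tilde g(x)),
\end{equation}
which is exactly (\ref{1.6}) for $F=\partial\tilde g$ and $\psi=\varphi'$; strict convexity ensures that $\varphi'$ is increasing (hence admissible), so (\ref{nec-reg}) genuinely controls $d(x,\partial\tilde g^{-1}(u^*))$, while the degenerate case in which $\partial\tilde g(x)$ has no subgradient near $0$ is dispatched by shrinking the neighborhoods so the bounded left side is dominated by the right.

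The hard part will be the transfer from $g$ to its convex envelope $\tilde g$: since $\tilde g\leq g$, the growth estimate (\ref{1.5}) does not survive convexification, so one cannot argue on $\tilde g$ directly. The device that circumvents this is the identity $g^*=\tilde g^*$, which lets the entire argument run on the conjugate, where differentiability of $h=g^*$ simultaneously yields the single-valued localization and, through (\ref{nec-mono}), the modulus $\varphi'$. Verifying that this differentiability is Fréchet (not merely Gâteaux) and uniform in $u^*$, and carefully matching the localization radius so that $d(u^*,\partial\tilde g(x))$ is realized by subgradients near $0$, are the only genuinely delicate points.
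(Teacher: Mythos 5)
Your proposal is correct, and its skeleton is the same as the paper's: pass to the conjugate via $g^*=(\overline{\rm co}\,g)^*$, derive the two-point monotonicity estimate by symmetrizing (\ref{1.5}) at the two minimizers, obtain differentiability of the conjugate with modulus governed by $(\varphi')^{-1}$, and convert back through $(\partial\overline{\rm co}\,g)^{-1}(u^*)=\partial g^*(u^*)$ into strong metric $\varphi'$-regularity. Where you genuinely diverge is in how the smoothness of $h=g^*$ is obtained. The paper first invokes Theorem 4.5 to recast $\varphi$-SLWP as $(\varphi')^{-1}$-TSLM and then applies Proposition 4.3, whose proof requires the Kenderov-type Lemma 4.2 (single-valuedness of monotone maps under the relaxed Aubin-type property (\ref{4.4})) together with Phelps' criterion that a norm-continuous selection of the subdifferential of a convex function yields Fr\'{e}chet differentiability. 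Your direct squeeze $0\leq h(u^*+v^*)-h(u^*)-\langle v^*,x_{u^*}\rangle\leq\|v^*\|\,\|x_{u^*+v^*}-x_{u^*}\|$ bypasses all of that machinery, and it is legitimate precisely because $\varphi$-SLWP already forces the minimizer map $u^*\mapsto x_{u^*}$ to be single-valued (via $\varphi(t)>0$ for $t>0$), whereas the paper's Proposition 4.3 is deliberately stated for set-valued $M$ so that it can be reused for the weak notions in Corollary 4.4 and Theorem 5.2. So your route is more elementary and self-contained; the paper's buys reusable infrastructure. Two small points of care. First, your closing remark that the case $\partial\tilde g(x)\cap B_{X^*}(0,\delta')=\emptyset$ is ``dispatched by shrinking the neighborhoods'' works only because $\varphi'(0)=0$ (which is implicitly forced, since $\varphi'$ must be admissible for the conclusion to make sense, and is assumed in Theorem 4.5); the paper instead handles this uniformly, without further shrinking, by splitting $d(u^*,\partial h(x))\geq\min\{d(u^*,\partial h(x)\cap B_{X^*}(0,\delta')),\delta'/2\}$ and rescaling the constant inside $\varphi'$ by the explicit factor $\beta=\bigl(\tfrac{\kappa\delta'}{2}+(\varphi')^{-1}(\tfrac{\tau\delta'}{2})\bigr)/(\varphi')^{-1}(\tfrac{\tau\delta'}{2})$. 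Second, your identification $\partial\tilde g^{-1}(u^*)=\{x_{u^*}\}$ should note that Fr\'{e}chet differentiability of $h$ collapses the subdifferential in $X^{**}$ to the single point $x_{u^*}\in X$, so no $X$ versus $X^{**}$ ambiguity survives; once said, your argument is complete and matches the paper's conclusion (\ref{N4.21}).
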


We postpone the proof of Theorem 3.4 at the end of Section 4.
The following corollary is immediate from Theorems 3.3 and 3.4.
\begin{corollary}
Let $\varphi$ be a strictly convex differentiable admissible function and $f$ be a proper lower semicontinuous convex function  on a Banach space $X$. Then $f$ has $\varphi$-{\it SLWP}  at  $\bar x\in {\rm dom} (f)$ if and only if $\partial f$ is strongly metrically $\varphi_+'$-regular at $(\bar x,0)$.
\end{corollary}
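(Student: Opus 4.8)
The plan is to derive this as a direct consequence of Theorems 3.3 and 3.4, exploiting the convexity of $f$ to collapse the gap between the sufficient condition (involving $\partial f$ itself) and the necessary condition (involving the subdifferential of a localized convex envelope). The forward implication is immediate: if $\partial f$ is strongly metrically $\varphi_+'$-regular at $(\bar x,0)$, then Theorem 3.3 applies verbatim — note that since $f$ is convex, $\bar x$ being a local minimizer follows automatically from $0\in\partial f(\bar x)$, which is built into the hypothesis that $(\bar x,0)\in{\rm gph}(\partial f)$ together with the regularity estimate — so $f$ has $\varphi$-\textit{SLWP} at $\bar x$. Since a differentiable convex $\varphi$ satisfies $\varphi_+'=\varphi'$, there is no discrepancy between the two notations for the regularity modulus.

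For the reverse implication, I would start from the conclusion of Theorem 3.4: the $\varphi$-\textit{SLWP} of $f$ yields some $r>0$ for which $\partial\overline{\rm co}(f+\delta_{B_X[\bar x,r]})$ is strongly metrically $\varphi'$-regular at $(\bar x,0)$. The key observation is that when $f$ is already convex, the localization $f+\delta_{B_X[\bar x,r]}$ is convex (a sum of convex functions), so its convex envelope is itself: $\overline{\rm co}(f+\delta_{B_X[\bar x,r]})=f+\delta_{B_X[\bar x,r]}$. Thus Theorem 3.4 directly gives strong metric $\varphi'$-regularity of $\partial(f+\delta_{B_X[\bar x,r]})$ at $(\bar x,0)$.

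The remaining step — and the one I expect to be the main obstacle — is to pass from strong metric $\varphi'$-regularity of $\partial(f+\delta_{B_X[\bar x,r]})$ at $(\bar x,0)$ to that of $\partial f$ at $(\bar x,0)$. Because strong metric $\varphi'$-regularity is a purely local property at $(\bar x,0)$, and $\bar x$ lies in the \emph{interior} of $B_X[\bar x,r]$, one expects $\partial(f+\delta_{B_X[\bar x,r]})$ and $\partial f$ to coincide near $\bar x$. I would make this precise by invoking the sum rule for convex subdifferentials: since $\delta_{B_X[\bar x,r]}$ is continuous (indeed identically $0$) on the open ball $B_X(\bar x,r)$, one has $\partial(f+\delta_{B_X[\bar x,r]})(x)=\partial f(x)+\partial\delta_{B_X[\bar x,r]}(x)=\partial f(x)$ for every $x\in B_X(\bar x,r)$, because the normal cone $\partial\delta_{B_X[\bar x,r]}(x)=\{0\}$ at interior points. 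Hence on a small neighborhood of $\bar x$ the two multifunctions agree, and since both estimate~(\ref{1.6}) and the single-valuedness requirement in the definition of strong metric $\varphi'$-regularity are localized at $(\bar x,0)$, shrinking $r$ if necessary transfers the property to $\partial f$.

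The one technical point requiring care is the single-valuedness clause: the definition demands that $(\partial f)^{-1}(u^*)\cap B_X(\bar x,\delta)$ be a singleton for all $u^*$ near $0$, and the inverse involves the \emph{global} subgradient relation rather than the local one. I would handle this by first choosing $\delta$ so small that the regularity estimate forces every point of $(\partial f)^{-1}(u^*)\cap B_X(\bar x,\delta)$ to lie within the ball where $\partial f=\partial(f+\delta_{B_X[\bar x,r]})$, and then identifying it with the unique point supplied by the strong metric $\varphi'$-regularity of the localized function; the estimate~(\ref{1.6}) for small $y=u^*$ and $x$ near $\bar x$ is inherited unchanged since it only references subgradients at points where the two subdifferentials coincide. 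This completes the equivalence.
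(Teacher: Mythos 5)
Your proof is correct and takes essentially the same route as the paper, which derives Corollary 3.5 as an immediate consequence of Theorems 3.3 and 3.4 exactly as you do (convexity giving the local minimizer hypothesis for Theorem 3.3 via $0\in\partial f(\bar x)$, and $\varphi_+'=\varphi'$ by differentiability). The details you supply for the necessity part --- that $\overline{\rm co}(f+\delta_{B_X[\bar x,r]})=f+\delta_{B_X[\bar x,r]}$ for convex $f$, that $\partial(f+\delta_{B_X[\bar x,r]})$ and $\partial f$ agree on the open ball $B_X(\bar x,r)$, and the localization needed to transfer both the estimate and the singleton clause --- are precisely what the paper leaves implicit in calling the corollary ``immediate.''
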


In the case when $\varphi(t)=t^2$,  Corollary 3.5 was established  by Arag\'{o}n Artacho and  Geoffroy \cite{aragon 2008}. In the  Asplund space case,  Mordukhovich and Nghia \cite{MN1} proved that $\partial f$ is strongly metrically regular at $(\bar x,0)$ if and only if there exist a neighborhood $U^*$ of 0, a neighborhood $U$ of $\bar x$ and a single-valued funciton $\vartheta:U^*\rightarrow U$ such that ${\rm gph}\vartheta={\rm gph}(\partial f)^{-1}\cap (U^*\times U)$ and
$$\tau\|x-u\|^2\leq f_{u^*}(x)-f_{u^*}(u)\quad\forall x\in U\;{\rm and}\;(u^*,u)\in{\rm gph}(\partial f)^{-1}\cap (U^*\times U),$$
where $\tau$ is a positive constant.

We conclude the section with a necessary condition for $\varphi$-{\it SLWP}, which is related to the following well-known optimality condition:
$$f(\bar x)=\min\limits_{x\in B_X(\bar x,r)}f(x)\Longrightarrow 0\in\partial f(\bar x).$$

\begin{proposition}
Let $\varphi$ be an admissible function and let $f$ be a proper lower semicontinuous function on a Banach space $X$.  Suppose that $f$ has $\varphi$-{\it SLWP} at $\bar x\in{\rm dom}(f)$. Then,
 $$0\in{\rm int}(\partial f(B_X(\bar x,\varepsilon)))\quad\forall \varepsilon\in(0,\;+\infty).$$
\end{proposition}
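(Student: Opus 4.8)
The plan is to use the defining growth inequality (\ref{1.5}) twice --- once for the given tilt $u^*$ and once for the zero tilt --- and then to read off a subgradient from Fermat's rule. Fix the constants $\delta,r,\tau,\kappa\in(0,+\infty)$ furnished by $\varphi$-\emph{SLWP} and, for each $u^*\in B_{X^*}(0,\delta)$, the associated point $x_{u^*}\in B_X[\bar x,r]$ with $x_0=\bar x$. Since $\partial f(B_X(\bar x,\varepsilon))$ is nondecreasing in $\varepsilon$, it suffices to prove the claim for $\varepsilon\le r$, and then the goal becomes to exhibit $\delta'>0$ with $B_{X^*}(0,\delta')\subseteq\partial f(B_X(\bar x,\varepsilon))$. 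First I would note that, because $\varphi\ge0$, inequality (\ref{1.5}) forces $x_{u^*}$ to be a minimizer of $f_{u^*}$ over $B_X[\bar x,r]$; in particular $f_{u^*}(x_{u^*})\le f_{u^*}(\bar x)$, which rearranges (using $f_{u^*}=f-\langle u^*,\cdot\rangle$) to the estimate $f(x_{u^*})-f(\bar x)\le\langle u^*,x_{u^*}-\bar x\rangle\le\|u^*\|\,\|x_{u^*}-\bar x\|$.

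Next I would feed in the zero-tilt instance of (\ref{1.5}): since $x_0=\bar x$, taking $u^*=0$ gives $\varphi(\kappa\|x-\bar x\|)\le\tau(f(x)-f(\bar x))$ for all $x\in B_X[\bar x,r]$; applying this at $x=x_{u^*}$ and combining with the previous estimate yields the single scalar inequality $\varphi(\kappa\|x_{u^*}-\bar x\|)\le\tau\|u^*\|\,\|x_{u^*}-\bar x\|$. The localization step then rests on the elementary but essential observation that admissibility forces $\varphi(s)>0$ for every $s>0$: otherwise $\varphi$ would vanish on an interval $[0,s_0]$ by monotonicity, contradicting the implication $[\varphi(t)\to0\Rightarrow t\to0]$ applied to the constant sequence $t\equiv s_0$. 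Consequently, if $\|x_{u^*}-\bar x\|\ge\varepsilon$, then monotonicity of $\varphi$ and the bound $\|x_{u^*}-\bar x\|\le r$ give $\varphi(\kappa\varepsilon)\le\tau\|u^*\|\,r$; choosing $\delta':=\min\{\delta,\varphi(\kappa\varepsilon)/(\tau r)\}$ and taking $\|u^*\|<\delta'$ produces a contradiction, so in fact $\|x_{u^*}-\bar x\|<\varepsilon\le r$.

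Finally, for $\|u^*\|<\delta'$ the point $x_{u^*}$ lies in the open ball $B_X(\bar x,r)$ and minimizes $f_{u^*}$ over the surrounding closed ball $B_X[\bar x,r]$, hence is a local minimizer of $f_{u^*}$; Fermat's rule for the Clarke--Rockafellar subdifferential gives $0\in\partial f_{u^*}(x_{u^*})=\partial f(x_{u^*})-u^*$ (the last equality being the sum rule already used in the proof of Theorem 3.3), so $u^*\in\partial f(x_{u^*})\subseteq\partial f(B_X(\bar x,\varepsilon))$. This shows $B_{X^*}(0,\delta')\subseteq\partial f(B_X(\bar x,\varepsilon))$, whence $0\in\mathrm{int}(\partial f(B_X(\bar x,\varepsilon)))$. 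I expect the main obstacle to be the localization in the second paragraph, namely guaranteeing that $x_{u^*}\to\bar x$ as $u^*\to0$ while $\varphi$ is assumed merely nondecreasing (not continuous or strictly increasing); the decisive point there is the positivity $\varphi(\kappa\varepsilon)>0$ extracted from admissibility.
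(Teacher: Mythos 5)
Your proof is correct and takes essentially the same route as the paper: both arguments use (\ref{1.5}) to see that $x_{u^*}$ minimizes $f_{u^*}$ over $B_X[\bar x,r]$, extract the scalar bound $\varphi(\kappa\|x_{u^*}-\bar x\|)\leq\tau r\|u^*\|$ (the paper via the $u^*$-tilt inequality at $x=\bar x$, you via the zero-tilt inequality at $x=x_{u^*}$ --- the same estimate), and then conclude from Fermat's rule together with $\partial f_{u^*}(x_{u^*})=\partial f(x_{u^*})-u^*$ that $B_{X^*}(0,\delta')\subset\partial f(B_X(\bar x,\varepsilon))$. The only difference is cosmetic: where the paper passes to the limit $u^*\rightarrow0$ and invokes the admissibility implication (\ref{2.1'}) qualitatively, you make the same localization quantitative by observing $\varphi(\kappa\varepsilon)>0$ and exhibiting the explicit radius $\delta'=\min\{\delta,\varphi(\kappa\varepsilon)/(\tau r)\}$.
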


\begin{proof}
Since $f$ has $\varphi$-{\it SLWP} at $\bar x$, there exist $r,\delta,\tau,\kappa\in(0,\;+\infty)$ such that for every $u^*\in B_{X^*}(0,\delta)$ there exists $x_{u^*}\in B_X[\bar x,r]$, with $x_0=\bar x$, satisfying (\ref{1.5}). Hence
\begin{equation}\label{3.a}
x_{u^*}\in\mathop{\arg\min}_{z\in B_X[\bar x,r]}f_{u^*}(z)\quad\forall u^*\in B_{X^*}(0,\delta)
\end{equation}
and
\begin{eqnarray*}
\varphi(\kappa\|\bar x-x_{u^*}\|)&\leq& \tau(f_{u^*}(\bar x)-f_{u^*}(x_{u^*}))\\
&=&\tau(f(\bar x)-f(x_{u^*})-\langle u^*,\bar x-x_{u^*}\rangle)\\
&=&\tau(\min\limits_{z\in B_X[\bar x,r]}f(z)-f(x_{u^*})-\langle u^*,\bar x-x_{u*}\rangle)\\
&\leq&-\tau\langle u^*,\bar x-x_{u^*}\rangle\leq\tau r\|u^*\|
\end{eqnarray*}
for all $u^*\in B_{X^*}(0,\delta)$. Hence $\lim\limits_{u^*\rightarrow0}\varphi(\kappa\|\bar x-x_{u^*}\|)=0$. This and (\ref{2.1'}) imply that
\begin{equation}\label{3.b}
\lim\limits_{u^*\rightarrow0}\kappa\|\bar x-x_{u^*}\|=0.
\end{equation}
Thus, for any $\varepsilon>0$ there exists $\gamma\in(0,\;\delta)$ such that $\|\bar x-x_{u^*}\|<\min\{\varepsilon,r\}$ for all $u^*\in B_{X^*}(0,\gamma)$. It follows from  (\ref{3.a}) that $x_{u^*}$ is a local minimizer of $f_{u^*}$ for each $u^*\in B_{X^*}(0,\gamma)$. Hence
$$0\in\partial f_{u^*}(x_{u^*})=\partial f(x_{u^*})-u^*\subset\partial f(B_X(\bar x,\varepsilon))-u^*\quad\forall u^*\in B(0,\gamma),$$
which implies $B_{X^*}(0,\gamma)\subset \partial f(B_X(\bar x,\varepsilon))$. The proof is complete.
\end{proof}

{\bf Remark.} From (\ref{3.b}), $x_{u^*}$ in Definition 1.1(i) can be taken in the open ball $B_X(\bar x,r)$ (taking a smaller $\delta$ if necessary). Thus, from the concerned definitions, it is clear that $\bar x$ is a stable second order local minimizer of $f$ (i.e. uniform second order growth condition) if and only if $f$ has $\varphi$-{\it SLWP} at $\bar x$ with $\varphi(t)=t^2$.

\section{Tilt-stability with respect to an admissible function}
\setcounter{equation}{0}
In this section, we will provide some necessary conditions and characterizations for the  tilt-stable minimum with respect to an admissible function.  First, we provide two  lemmas which play important roles in the proofs of the main results in this section.  For a  continuous function $\omega:\mathbb{R}_+\rightarrow\mathbb{R}_+$ with $\omega(0)=0$,  recall (cf. \cite{journai and thibault}) that a proper lower semicontinuous extended real-valued function $g$ on a Banach space $E$ is $C^{1,\omega}$ smooth on $D\subset{\rm dom}(g)$ if  $g$ is Fr\'{e}chet differentiable on $D$ and
$$
\|\triangledown g(x_1)-\triangledown g(x_2)\|\leq \omega(\|x_1-x_2\|)\quad \forall x_1, x_2\in D.
$$

\begin{lemma}
\label{proposition2}
Let $\omega:\mathbb{R}_+\rightarrow\mathbb{R}_+$ be an increasing continuous function with $\omega^{-1}(0)=\{0\}$, $E$  be a Banach space and let $g:E\rightarrow\mathbb{R}\cup\{+\infty\}$ be a proper lower semicontinuous function. Let $\bar u\in E$ and $\delta>0$ be such that $g$ is $C^{1,\omega}$ smooth  on $B_E\big(\bar u,\delta+\omega^{-1}(2\omega(\delta))\big)\subset {\rm dom}(g)$.
Then
\begin{equation}
g^*(x^*)\geq \langle x^*,u\rangle-g(u)+ \int_0^{\|x^*-\triangledown g(u)\|} \omega^{-1}(s)ds \label{prop3}
\end{equation}
for all $(u, x^*)\in B_E(\bar u, \delta)\times B_{E^*}(\triangledown g(\bar u),\omega (\delta))$.
\end{lemma}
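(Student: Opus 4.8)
The plan is to bound the conjugate from below by its very definition, $g^*(x^*)\ge\langle x^*,y\rangle-g(y)$, evaluated at a cleverly chosen point $y$ close to $u$, and then to optimize the choice of $y$. First I would record the elementary upper estimate that follows from $C^{1,\omega}$ smoothness: whenever the segment $[u,y]$ lies in the region where $g$ is $C^{1,\omega}$, the fundamental theorem of calculus applied to $t\mapsto g(u+t(y-u))$ together with $\|\triangledown g(x_1)-\triangledown g(x_2)\|\le\omega(\|x_1-x_2\|)$ gives
\[
g(y)\le g(u)+\langle\triangledown g(u),y-u\rangle+\int_0^{\|y-u\|}\omega(s)\,ds,
\]
the integral arising from the substitution $s=t\|y-u\|$. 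Substituting this into $g^*(x^*)\ge\langle x^*,y\rangle-g(y)$ and writing $v:=x^*-\triangledown g(u)$ yields
\[
g^*(x^*)\ge\langle x^*,u\rangle-g(u)+\langle v,y-u\rangle-\int_0^{\|y-u\|}\omega(s)\,ds.
\]

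Next I would optimize the right-hand side over $y$. For a fixed step length $t=\|y-u\|$ the term $\langle v,y-u\rangle$ is maximized by aligning $y-u$ with $v$; since $v$ is a functional on $E$ its norm is only approached, not necessarily attained, so I would pick unit vectors $e_n$ with $\langle v,e_n\rangle\to\|v\|$, set $y_n=u+t\,e_n$, and let $n\to\infty$ at the end. This reduces the problem to the scalar maximization of $\|v\|\,t-\int_0^t\omega(s)\,ds$, whose optimal step is $t^*=\omega^{-1}(\|v\|)$. The key algebraic identity is the equality case of Young's inequality for the pair $\omega,\omega^{-1}$: since $\omega(t^*)=\|v\|$,
\[
\|v\|\,t^*-\int_0^{t^*}\omega(s)\,ds=\int_0^{\|v\|}\omega^{-1}(s)\,ds=\int_0^{\|x^*-\triangledown g(u)\|}\omega^{-1}(s)\,ds,
\]
which is exactly the target integral; passing to the limit over $e_n$ then gives the claimed inequality.

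The step that needs genuine care — and that I expect to be the main obstacle, or at least the reason the radius in the hypothesis takes its peculiar form — is verifying that the chosen points $y_n=u+t^*e_n$ (and the whole segments $[u,y_n]$) really lie in the set $B_E(\bar u,\delta+\omega^{-1}(2\omega(\delta)))$ on which $g$ is $C^{1,\omega}$, so that the upper estimate is legitimate. Here I would use both ball hypotheses: from $x^*\in B_{E^*}(\triangledown g(\bar u),\omega(\delta))$ and $u\in B_E(\bar u,\delta)$ together with monotonicity of $\omega$ and the $\omega$-continuity of $\triangledown g$ one gets $\|v\|\le\|x^*-\triangledown g(\bar u)\|+\|\triangledown g(\bar u)-\triangledown g(u)\|<2\omega(\delta)$, whence $t^*=\omega^{-1}(\|v\|)\le\omega^{-1}(2\omega(\delta))$ by monotonicity of $\omega^{-1}$. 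Consequently every point of $[u,y_n]$ is within distance $\|u-\bar u\|+t^*<\delta+\omega^{-1}(2\omega(\delta))$ of $\bar u$, as required. The remaining items — continuity of $\omega$ guaranteeing the FTC and the integral manipulations, and the strict monotonicity encoded in $\omega^{-1}(0)=\{0\}$ ensuring $\omega^{-1}$ is well defined and the Young equality is exact — are routine.
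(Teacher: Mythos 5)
Your proposal is correct and follows essentially the same route as the paper's proof: the fundamental-theorem-of-calculus upper estimate $g(y)\le g(u)+\langle\triangledown g(u),y-u\rangle+\int_0^{\|y-u\|}\omega(s)\,ds$, the lower bound on $g^*$ via near-maximizing unit vectors at step length $t^*=\omega^{-1}(\|x^*-\triangledown g(u)\|)$, the Young-type equality (which the paper obtains by integration by parts), and the same two-ball estimate showing the test points stay inside $B_E\big(\bar u,\delta+\omega^{-1}(2\omega(\delta))\big)$. No gaps; the argument matches the paper's in all essentials.
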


\begin{proof}
Let $\delta_0:=\delta+\omega^{-1}(2\omega(\delta))$.
Then
$$\|\triangledown g(x_1)-\triangledown g(x_2)\|\leq \omega(\|x_1-x_2\|)\quad \forall x_1, x_2\in B_E(\bar u,\delta_0)
$$
(because $g$ is  $C^{1,\omega}$ smooth on $B_E(\bar u,\delta_0)$). Hence,
\begin{eqnarray*}
g(v)-g(u)-\langle \triangledown g(u), v-u\rangle&=&\int_0^1\langle \triangledown g(u+t(v-u))-\triangledown g(u), v-u\rangle dt\\
&\leq& \int_0^1\omega (t\|v-u\|)\|v-u\|dt\\
&=&\int_0^{\|v-u\|} \omega (t)dt
\end{eqnarray*}
for all  $u,v\in B_E(\bar u, \delta_0)$.  Let $(u,x^*)\in B_E(\bar u,\delta)\times B_{E^*}(\triangledown g(\bar u), \omega (\delta))$. Then,
\begin{eqnarray*}
g^*(x^*)&\geq& \sup_{v\in B_E(\bar u, \delta_0)}\left\{\langle x^*,v\rangle-g(v)\right \}\\
&\geq& \sup_{v\in B_E(\bar u, \delta_0)}\left\{\langle x^*,v\rangle-g(u)-\langle \triangledown g(u), v-u\rangle-\int_0^{\|v-u\|} \omega (t)dt\right \}\\
&=&\langle x^*,u\rangle-g(u)+\sup_{v\in B_E(\bar u, \delta_0)}\left\{\langle x^*-\triangledown g(u),v-u\rangle-\int_0^{\|v-u\|} \omega (t)dt\right \}.\\
\end{eqnarray*}
Thus, to prove (\ref{prop3}), it suffices to show that
\begin{eqnarray}
\beta:&=&\sup_{v\in B_E(\bar u, \delta_0)}\left\{\langle x^*-\triangledown g(u),v-u\rangle-\int_0^{\|v-u\|} \omega (t)dt\right \}\nonumber\\
&\geq& \int_0^{\|x^*-\triangledown g(u)\|} \omega^{-1} (t)dt.\label{prop4}
\end{eqnarray}
To do this, take a sequence $\{z_n\}$ in $E$ such that each $\|z_n\|=1$ and
\begin{equation}
\langle x^*-\triangledown g(u),z_n\rangle \rightarrow \|x^*-\triangledown g(u)\|.\label{prop6}
\end{equation}
For each $n\in \mathbb N$, let
$$
v_n:=u+\omega^{-1}(\|x^*-\triangledown g(u)\|)z_n.
$$
Then
\begin{eqnarray*}
\|v_n-\bar u\| &\leq& \|u-\bar u\|+\omega^{-1}(\|x^*-\triangledown g(u)\|)\\
&<& \delta +\omega^{-1}(\|x^*-\triangledown g(\bar u)\|+\|\triangledown g(\bar u)-\triangledown g(u)\|)\\
&\leq& \delta + \omega^{-1}\left(\omega(\delta)+\omega(\|\bar u-u\|)\right)\\
&\leq& \delta + \omega^{-1}(2\omega(\delta ))= \delta_0.
\end{eqnarray*}
This and the definition of $\beta$ imply  that
\begin{eqnarray*}
\beta &\geq& \langle x^*-\triangledown g(u),v_n-u\rangle-\int_0^{\|v_n-u\|} \omega (t)dt\\
&=&\omega^{-1}(\|x^*-\triangledown g(u)\|)\langle x^*-\triangledown g(u),z_n \rangle-\int_0^{\omega^{-1}(\|x^*-\triangledown g(u)\|)} \omega (t)dt.
\end{eqnarray*}
It follows from  (\ref{prop6})  that
\begin {eqnarray*}
\beta &\geq& \omega^{-1}(\|x^*-\triangledown g(u)\|)\cdot \|x^*-\triangledown g(u)\|-\int_0^{\omega^{-1}(\|x^*-\triangledown g(u)\|)} \omega (t)dt\\
&=&\int_0^{\omega^{-1}(\|x^*-\triangledown g(u)\|)} td\omega (t)\\
&=&\int_0^{\|x^*-\triangledown g(u)\|} \omega^{-1} (s)ds
\end {eqnarray*}
(the first equality holds because of integration by parts),
which verifies  (\ref{prop4}). The proof is complete.
\end{proof}\\

From \cite[Theorem 3.5.12]{CZ}, one has the following result: if $g$ is convex and $C^{1,\omega}$ smooth, then  there exists a convex admissible function $\omega_1$ such that
$$
g^*(x^*)\geq g^*(\triangledown g(u)) +\langle x^*-\triangledown g(u),u\rangle+\omega_1(\|x^*-\triangledown g(u)\|),$$
which implies
$$
g^*(x^*)\geq \langle x^*,u\rangle-g(u)+ \omega_1(\|x^*-\triangledown g(u)\|).
$$
In contrast, without the convexity assumption on $g$, Lemma~\ref{proposition2}  provides a  quantitative and calculable formula between  $g$ and $g^*$.

Let $Z$ be a Banach space and recall that a set-valued mapping $F:Z\rightrightarrows Z^*$ is lower semicontinuous at $z_0\in{\rm dom}(F):=\{z\in Z:\;F(z)\not=\emptyset\}$ if  for any open set $V$ with $V\cap F(z_0)\not=\emptyset$ there exists a neighborhood $U$ of $z_0$ such that $V\cap F(z)\not=\emptyset$ for all $z\in U$. Let $\omega:\mathbb{R}_+\rightarrow\mathbb{R}_+$ be such that
$$\lim\limits_{t\rightarrow0^+}\omega(t)=\omega(0)=0.$$
It is routine to verify that the lower semicontinuity of $F$ at $z_0$ is implied by the following $\omega$-Lipschitz continuity (L$_{\omega}$): there exists $\delta>0$ such that
$$F(z_1)\subset F(z_2)+\omega(\|z_1-z_2\|)B_{Z^*}\quad\forall z_1,z_2\in B_Z(z_0,\delta).\leqno{\rm (L_{\omega})}$$
For $(z_0,z_0^*)\in {\rm gph}(F):=\{(z,z^*):\;z\in Z\;{\rm and}\;z^*\in F(z)\}$, as an extension of the Aubin property, we consider the following property: there exists $\gamma>0$ such that
\begin{equation}\label{4.4}
F(z_1)\cap B_{Z^*}(z_0^*,\gamma)\subset F(z_2)+\omega(\|z_2-z_1\|)B_{Z^*}\quad\forall z_1,z_2\in B_Z(z_0,\delta).
\end{equation}
Clearly, (L$_{\omega}$) implies (\ref{4.4}), but the converse implication is not necessarily true. Indeed, (\ref{4.4}) does not necessarily imply the lower semicontinuity of $F$ at $z_0$. For example, let $Z=\mathbb{R}$ and $F(0)=\{0,2\}$ and $F(t)=\{\omega(|t|)\}:=\{|t|\}$ for all $t\in\mathbb{R}\setminus\{0\}$. Then, $F(z_1)\cap B_{\mathbb{R}}(0,1)=\{|z_1|\}$ and $F(z_2)+\omega(|z_1-z_2|)B_{\mathbb{R}}=|z_2|+|z_1-z_2|B_{\mathbb{R}}$ for all $z_1,z_2\in B_{\mathbb{R}}(0,1)$; hence
$$F(z_1)\cap B_{\mathbb{R}}(0,1)\subset F(z_2)+\omega(|z_1-z_2|)B_{\mathbb{R}}\quad\forall z_1,z_2\in B_{\mathbb{R}}(0,1).$$
On the other hand, since $B_{\mathbb{R}}(2,1)\cap F(0)=\{2\}$ and $B_{\mathbb{R}}(2,1)\cap F(z)=\emptyset$ for all $z\in B_{\mathbb{R}}(0,1)\setminus\{0\}$, $F$ is not semicontinuous at 0.

Recall that a set-valued mapping $F$ is monotone if
$$0\leq\langle z_1^*-z_2^*,z_1-z_2\rangle\quad\forall (z_1,z_1^*),(z_2,z_2^*)\in {\rm gph}(F).$$
Kenderov \cite{PK} proved the following interesting result on the single-valuedness of a monotone mapping.

{\bf Result K.} {\it Let $F$ be a monotone mapping from a Banach space $Z$ to $Z^*$ and suppose that $F$ is lower semicontinuous at $z_0$ with $F(z_0)\not=\emptyset$. Then, $F(z_0)$ is a singleton.}

Since (\ref{4.4}) does not imply the lower semicontinuity of $F$ at $z_0$, the following lemma provides a supplement of Result K.

\begin{lemma}
Let $\omega:\mathbb{R}_+\rightarrow\mathbb{R}_+$ be a function such that $\lim\limits_{t\rightarrow0^+}\omega(t)=\omega(0)=0$ and let $F$ be a monotone mapping from a Banach space $Z$ to $Z^*$ with $(z_0,z_0^*)\in{\rm gph}(F)$. Suppose that there exist $\gamma,\delta\in(0,\;+\infty)$ such that (\ref{4.4}) holds.
Let $\gamma':=\sup\{t\geq0:\;[0,\;t]\subset \omega^{-1}[0,\;\gamma)\}$ and $\delta':=\min\{\delta,\gamma'\}$. Then, $F(z)$ is a singleton for all $z\in B_Z(z_0,\delta')$.
\end{lemma}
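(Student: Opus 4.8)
The plan is to fix an arbitrary $z\in B_Z(z_0,\delta')$ and show that every element of $F(z)$ coincides with one distinguished element manufactured from the reference pair $(z_0,z_0^*)$. First I would record the elementary properties of $\gamma'$: since $\omega(0)=0$ and $\omega(t)\to 0$ as $t\to0^+$, there is $\varepsilon>0$ with $\omega(s)<\gamma$ on $[0,\varepsilon]$, so $\gamma'\geq\varepsilon>0$ and hence $\delta'>0$. Moreover, for every $s\in[0,\gamma')$ one has $\omega(s)<\gamma$: if $s<\gamma'$, the definition of the supremum yields some $t$ in the defining set with $s\leq t$, and then $[0,t]\subset\omega^{-1}[0,\gamma)$ forces $\omega(s)<\gamma$. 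In particular $\omega(\|z-z_0\|)<\gamma$ whenever $\|z-z_0\|<\delta'$.

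Second, I would use (\ref{4.4}) with $z_1=z_0$ and $z_2=z$ to produce a well-controlled point of $F(z)$. Since $z_0^*\in F(z_0)\cap B_{Z^*}(z_0^*,\gamma)$, the inclusion (\ref{4.4}) supplies some $p^*\in F(z)$ with $\|p^*-z_0^*\|\leq\omega(\|z-z_0\|)<\gamma$, so $p^*\in F(z)\cap B_{Z^*}(z_0^*,\gamma)$ and this intersection is nonempty.

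Third, and this is the heart of the argument, I would combine the ``restricted lower semicontinuity'' encoded in (\ref{4.4}) with monotonicity. Fix an arbitrary $a^*\in F(z)$ and an arbitrary unit vector $h$, and set $z_t:=z+th$; for all small $t>0$ we have $z_t\in B_Z(z_0,\delta')$. Applying (\ref{4.4}) with $z_1=z$, $z_2=z_t$ to the point $p^*\in F(z)\cap B_{Z^*}(z_0^*,\gamma)$ produces $p_t^*\in F(z_t)$ with $\|p^*-p_t^*\|\leq\omega(t)\to0$. Monotonicity applied to the graph pairs $(z,a^*)$ and $(z_t,p_t^*)$ gives $\langle p_t^*-a^*,th\rangle\geq0$, hence $\langle p_t^*-a^*,h\rangle\geq0$; letting $t\to0^+$ yields $\langle p^*-a^*,h\rangle\geq0$. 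Since $h$ was an arbitrary unit vector, replacing $h$ by $-h$ gives the reverse inequality, so $\langle p^*-a^*,h\rangle=0$ for all $h$, i.e. $a^*=p^*$. As $a^*\in F(z)$ was arbitrary, $F(z)=\{p^*\}$ is a singleton.

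The main obstacle, and the reason monotonicity is indispensable, is that (\ref{4.4}) does \emph{not} provide genuine lower semicontinuity of $F$ (the example preceding the lemma shows this): it only lets one track image points that already lie in $B_{Z^*}(z_0^*,\gamma)$. The decisive trick is therefore to avoid approximating the arbitrary, possibly far-away element $a^*$, and instead approximate only the controlled element $p^*$, using $a^*$ merely as the fixed ``test'' point in the monotonicity inequality; the freedom to choose the direction $h$ arbitrarily is exactly what upgrades the resulting one-sided inequality into the equality $a^*=p^*$. A secondary point to handle carefully is the bookkeeping of radii---ensuring that $z$, $z_t$, and $z_0$ all remain in $B_Z(z_0,\delta)$ and that $\omega(\|z-z_0\|)$ stays strictly below $\gamma$---which is precisely what the definition of $\gamma'$ and the choice $\delta'=\min\{\delta,\gamma'\}$ are designed to guarantee.
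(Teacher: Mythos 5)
Your proof is correct and follows essentially the same route as the paper's: both first apply (\ref{4.4}) at the reference pair $(z_0,z_0^*)$ to produce a controlled element $p^*\in F(z)\cap B_{Z^*}(z_0^*,\gamma)$ (the paper's $v_z^*$), and then combine (\ref{4.4}) along perturbations $z+th$ with the monotonicity inequality to force any $a^*\in F(z)$ to coincide with $p^*$. The only difference is presentational: the paper argues by contradiction with one direction $h$ chosen so that $\langle v_z^*-z^*,h\rangle<0$, while you pass to the limit over all unit directions directly.
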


\begin{proof}
Let $z\in B_Z(z_0,\delta')$. Then, $\|z-z_0\|<\delta'\leq\gamma'$ and so $\omega(\|z-z_0\|)<\gamma$. This and (\ref{4.4}) imply that
$z_0^*\in F(z)+\omega(\|z-z_0\|)B_{Z^*}$.
Hence  there exists $v_z^*\in F(z)$ such that $\|v_z^*-z_0^*\|\leq\omega(\|z-z_0\|)<\gamma$.
It suffices to show that $F(z)\setminus\{v^*_z\}=\emptyset$. To do this, suppose to the contrary that there exists $z^*\in F(z)$ such that $v_z^*\not=z^*$. Then, there exists $h\in Z$ with $\|h\|=1$ such that
\begin{equation}\label{4.5}
\langle v_z^*-z^*,h\rangle<0.
\end{equation}
Since $\|z-z_0\|<\delta$, there exists  a sequence $\{\varepsilon_n\}\subset (0,\;+\infty)$ converging to 0 such that $\{z+\varepsilon_n h\}\subset B_Z(z_0,\delta)$. It follows from (\ref{4.4}) that
$$v_z^*\in F(z)\cap B(z_0^*,\gamma)\subset F(z+\varepsilon_nh)+\omega(\varepsilon_n)B_{Z^*}\quad \forall n\in\mathbb{N}.$$
Hence,  for any $n\in\mathbb{N}$ there exists $z_n^*\in F(z+\varepsilon_n h)$ such that $\|z_n^*-v_z^*\|\leq\omega(\varepsilon_n)\rightarrow0$. On the other hand, by the monotonicity of $F$,
$$0\leq\langle z_n^*-z^*,\varepsilon_nh\rangle=\varepsilon_n\langle z_n^*-z^*,h\rangle=\varepsilon_n(\langle z_n^*-v_z^*,h\rangle+\langle v_z^*-z^*,h\rangle)\quad\forall n\in\mathbb{N}.$$
Therefore,
$$\langle v^*_z-z^*,h\rangle\geq-\langle z_n^*-v_z^*,h\rangle\geq-\|z_n^*-v_z^*\|\geq-\omega(\varepsilon_n)\rightarrow0,$$
contradicting (\ref{4.5}). The proof is complete.
\end{proof}

The following proposition provides  a necessary condition for the  tilt-stability of a proper lower semicontinuous function $f$ in terms of the $C^{1,\omega}$ smoothness of the concerned conjugate function.

\begin{proposition}
\label{pro-smooth}
Let $\omega:\mathbb{R}_+\rightarrow\mathbb{R}_+ $ be a function such that $\lim\limits_{t\rightarrow0^+}\omega(t)=\omega(0)=0$. Let  $f$ be a proper lower semicontinuous function on a Banach space $X$ and  $\bar x$ be a  minimizer of $f$. Suppose that there exist $r, \delta,\gamma \in(0,+\infty)$ and  a set-valued mapping $M: B_{X^*}(0,\delta)\rightrightarrows B_X[\bar x, r]$ with $\bar x \in M(0)$ such that
\begin{equation}
M(u^*)\subset \mathop{\arg\min}_{z\in B_X[\bar x,r]}f_{u^*}(z) \quad \forall u^*\in B_{X^*}(0,\delta)\label{pro-s1}
\end{equation}
and
\begin{equation}
M(x^*)\cap B_X(\bar x,\gamma)\subset M(u^*)+\omega (\|x^*-u^*\|)B_X \quad \forall x^*, u^*\in B_{X^*}(0,\delta).\label{pro-s0}
\end{equation}
Then, there exists $\delta'>0$ such that the conjugate function $(f+\delta_{B_X[\bar x, r]})^*$ is $C^{1,\omega}$ smooth on $B_{X^*}(0,\delta')$ and
\begin{equation}
 \{\triangledown(f+\delta_{B_X[\bar x, r]})^*(u^*)\}=M(u^*)\quad \forall u^* \in B_{X^*}(0, \delta').\label{pro-s2}
\end{equation}
\end{proposition}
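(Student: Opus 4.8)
The plan is to pass to the convex conjugate of $g:=f+\delta_{B_X[\bar x,r]}$ and to recognise $M$ as a single-valued, $\omega$-continuous selection of $\partial g^*$; the Fenchel--Young equality together with a standard convex ``sandwich'' will then force $g^*$ to be Fr\'echet differentiable with $\triangledown g^*=M$. First I would record the basic properties of $g^*$. Since $\bar x$ is a (global) minimizer of $f$, we have $g\geq f(\bar x)$, so $g$ is bounded below and $g^*(u^*)=-\inf_{z\in B_X[\bar x,r]}f_{u^*}(z)$ is finite for every $u^*\in X^*$; moreover the elementary estimate $g^*(u_1^*)-g^*(u_2^*)\leq(\|\bar x\|+r)\|u_1^*-u_2^*\|$ shows that $g^*$ is a finite, (globally Lipschitz, hence) continuous convex function on $X^*$.

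Next I would establish the inclusion $M(u^*)\subset\partial g^*(u^*)$ for every $u^*\in B_{X^*}(0,\delta)$. Indeed, if $x\in M(u^*)$ then by (\ref{pro-s1}) $x$ minimises $f_{u^*}$ over $B_X[\bar x,r]$, hence minimises $g_{u^*}$ over all of $X$, so that $g^*(u^*)=\langle u^*,x\rangle-g(x)$; the implication recorded in Section 2 ($f^*(x^*)=\langle x^*,x\rangle-f(x)\Rightarrow x\in\partial f^*(x^*)$) then gives $x\in\partial g^*(u^*)$. In particular $M$ is a submapping of the monotone operator $\partial g^*$ and is therefore itself monotone. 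Single-valuedness is where Lemma 4.2 enters: I would apply it with $Z=X^*$ (so $Z^*=X^{**}\supset X$), $F=M$, $z_0=0$ and $z_0^*=\bar x\in M(0)$, noting that hypothesis (\ref{pro-s0}) yields (\ref{4.4}) because $M$ takes values in $X\subset X^{**}$, whence $M(x^*)\cap B_X(\bar x,\gamma)=M(x^*)\cap B_{X^{**}}(\bar x,\gamma)$ and $\omega(\cdot)B_X\subset\omega(\cdot)B_{X^{**}}$. Lemma 4.2 then produces $\delta'>0$ with $M(u^*)=\{m(u^*)\}$ a singleton for all $u^*\in B_{X^*}(0,\delta')$.

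To quantify the location of this singleton I would feed the pair $(0,u^*)$ into (\ref{pro-s0}) and use $\bar x\in M(0)\cap B_X(\bar x,\gamma)$, obtaining $\|m(u^*)-\bar x\|\leq\omega(\|u^*\|)<\gamma$ (shrinking $\delta'$ so that $\omega(\|u^*\|)<\gamma$ on the ball). This guarantees $m(u^*)\in B_X(\bar x,\gamma)$, so that inserting the two singletons $\{m(u_1^*)\}$ and $\{m(u_2^*)\}$ back into (\ref{pro-s0}) gives the $\omega$-continuity
\[
\|m(u_1^*)-m(u_2^*)\|\leq\omega(\|u_1^*-u_2^*\|)\qquad\text{on }B_{X^*}(0,\delta').
\]

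It remains to upgrade this continuous selection to Fr\'echet differentiability of $g^*$, and this is the main obstacle: a priori $\partial g^*$ lives in the bidual $X^{**}$, whereas the candidate gradient $m(u^*)$ lies in $X$. I would sidestep any computation of $\partial g^*$ by running the convex sandwich directly. For $u^*,u^*+w^*\in B_{X^*}(0,\delta')$, the subgradient inequalities coming from $m(u^*)\in\partial g^*(u^*)$ and $m(u^*+w^*)\in\partial g^*(u^*+w^*)$ give
\[
0\leq g^*(u^*+w^*)-g^*(u^*)-\langle w^*,m(u^*)\rangle\leq\langle w^*,m(u^*+w^*)-m(u^*)\rangle\leq\|w^*\|\,\omega(\|w^*\|).
\]
Dividing by $\|w^*\|$ and letting $\|w^*\|\to0$ shows that $g^*$ is Fr\'echet differentiable at $u^*$ with $\triangledown g^*(u^*)=m(u^*)$ (which a posteriori forces $\partial g^*(u^*)=\{m(u^*)\}$, resolving the bidual issue). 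Since $m=\triangledown g^*$ satisfies the displayed $\omega$-continuity, $g^*=(f+\delta_{B_X[\bar x,r]})^*$ is $C^{1,\omega}$ smooth on $B_{X^*}(0,\delta')$ and (\ref{pro-s2}) holds, completing the proof.
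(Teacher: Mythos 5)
Your proposal is correct and follows essentially the same route as the paper: the Fenchel--Young equality gives $M(u^*)\subset\partial\bigl(f+\delta_{B_X[\bar x,r]}\bigr)^*(u^*)$, monotonicity of $M$ is inherited from the convex conjugate's subdifferential, Lemma 4.2 (applied through the embedding $X\subset X^{**}$, which you spell out more carefully than the paper does) yields single-valuedness near $0$, and (\ref{pro-s0}) then gives the $\omega$-continuity of the selection. The only deviation is at the last step, where the paper cites \cite[Proposition 2.8]{phelps} to pass from a norm-continuous selection to Fr\'echet differentiability, while you prove this inline via the two subgradient inequalities; this is just the standard proof of that cited fact, and it has the minor merit of making explicit both the quantitative estimate $\|\triangledown g^*(u_1^*)-\triangledown g^*(u_2^*)\|\leq\omega(\|u_1^*-u_2^*\|)$ required for $C^{1,\omega}$ smoothness and the resolution of the bidual issue, both of which the paper leaves implicit.
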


\begin{proof}
 Let $u^*\in B_{X^*}(0,\delta)$ and $u\in M(u^*)$. Then, by (\ref{pro-s1}), one has
$$
(f+\delta_{B_X[\bar x, r]})^*(u^*)=\langle u^*, u\rangle -f(u),
$$
which implies that $u\in \partial (f+\delta_{B_X[\bar x, r]})^*(u^*)$. Hence $M(u^*)\subset \partial (f+\delta_{B_X[\bar x, r]})^*(u^*)$. Note that the subdifferential mapping $\partial (f+\delta_{B_X[\bar x, r]})^*$ is monotone (because the conjugate function $(f+\delta_{B_X[\bar x, r]})^*$ is always convex). Therefore, $M$ is also  monotone. Thus, by (\ref{pro-s0}) and Lemma 4.2, there exists $\delta'\in(0,\;\delta)$ such that $M$ is single-valued on $B_{X^*}(0,\delta')$. It follows from (\ref{pro-s0}) and the continuity of $\omega$ that $M$ is a norm-norm continuous selection of $\partial (f+\delta_{B_X[\bar x, r]})^*$ on $B_{X^*}(0,\delta')$. This and  \cite[Proposition 2.8]{phelps}  imply that the convex function $(f+\delta_{B_X[\bar x, r]})^*$ is Fr\'{e}chet differentiable on $B_{X^*}(0,\delta')$ and
$$
 \triangledown(f+\delta_{B_X[\bar x, r]})^*(u^*)=M(u^*)\quad \forall u^* \in B_{X^*}(0, \delta').
$$
The proof is complete.
\end{proof}

From Proposition 4.3 and Definition 1.1(ii), we have the following corollary.

\begin{corollary}
Let $\psi$ be an admissible function such that $\lim\limits_{t\rightarrow0^+}\psi(t)=\psi(0)$. Let $f$ be a proper lower semicontinuous function on a Banach space $X$ and $\bar x$ be a  minimizer of $f$. Then, the following statements are equivalent:\\
(i) $f$ has weak $\psi$-{\it TSLM} at $\bar x$, namely there exist $r,\gamma,\kappa, \delta, \tau\in(0,\;+\infty)$ such that
$$
\mathop{\arg\min}_{z\in B_X[\bar x,r]}f_{x^*}(z)\cap B_X(\bar x,\gamma)\subset\mathop{\arg\min}_{z\in B_X[\bar x,r]}f_{u^*}(z)+{\kappa} \psi (\tau \|x^*-u^*\|)B_X
$$
for all $x^*, u^*\in B_{X^*}(0,\delta)$.\\
(ii) There exist $\delta,r,\gamma,\kappa,\tau \in(0,+\infty)$ and  a set-valued mapping $M: B_{X^*}(0,\delta)\rightrightarrows B_X[\bar x, r]$ with $\bar x\in M(0) $ such that (\ref{pro-s1})  holds and $$
M(x^*)\cap B_X(\bar x,\gamma)\subset M(u^*)+\kappa\psi (\tau\|x^*-u^*\|)B_X \quad \forall x^*, u^*\in B_{X^*}(0,\delta).
$$
(iii) $f$ has $\psi$-{\it TSLM} at $\bar x$.
\end{corollary}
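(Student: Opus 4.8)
The plan is to use statement (ii) as a hub: I would show that both (i) and (iii) imply (ii) almost immediately, and then locate the genuine content in recovering (i) and (iii) back from (ii) through Proposition 4.3.

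For (i) $\Rightarrow$ (ii) I would simply take the full solution map $M(u^*):=\mathop{\arg\min}_{z\in B_X[\bar x,r]}f_{u^*}(z)$. Then (\ref{pro-s1}) holds as an equality, the hypothesis that $\bar x$ is a (global) minimizer of $f$ gives $\bar x\in\mathop{\arg\min}_{z\in B_X[\bar x,r]}f(z)=M(0)$, and the inclusion in (i) is verbatim the inclusion required in (ii). For (iii) $\Rightarrow$ (ii) I would read the single-valued map $M$ furnished by Definition 1.1(ii) as a set-valued map with singleton values: (\ref{1.1}) is exactly (\ref{pro-s1}), and (\ref{1.4}) gives $\|M(x^*)-M(u^*)\|\le\frac{1}{\kappa}\psi(\tau\|x^*-u^*\|)$, so that $M(x^*)\cap B_X(\bar x,\gamma)\subset M(u^*)+\frac{1}{\kappa}\psi(\tau\|x^*-u^*\|)B_X$ for any $\gamma>0$, which is the inclusion in (ii) after relabelling the constant.

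The substantive step is to derive both (i) and (iii) from (ii) using the conjugate-function machinery. Writing $g:=f+\delta_{B_X[\bar x,r]}$ and $\omega(t):=\kappa\psi(\tau t)$, I would first check that $\omega$ satisfies the standing hypothesis $\lim_{t\to0^+}\omega(t)=\omega(0)=0$, which follows from $\psi(0)=0$ and $\lim_{t\to0^+}\psi(t)=\psi(0)$. Applying Proposition 4.3 to $M$ and $\omega$ then yields a radius $\delta'>0$ on which $g^*$ is $C^{1,\omega}$ smooth, $M$ is single-valued, and $\{\triangledown g^*(u^*)\}=M(u^*)$ for all $u^*\in B_{X^*}(0,\delta')$. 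For (ii) $\Rightarrow$ (iii) I would read the modulus directly off the $C^{1,\omega}$ smoothness: for $u_1^*,u_2^*\in B_{X^*}(0,\delta')$,
$$\|M(u_1^*)-M(u_2^*)\|=\|\triangledown g^*(u_1^*)-\triangledown g^*(u_2^*)\|\le\omega(\|u_1^*-u_2^*\|)=\kappa\psi(\tau\|u_1^*-u_2^*\|),$$
which is (\ref{1.4}) with constant $1/\kappa$; together with $M(0)=\bar x$ and (\ref{pro-s1}) this gives $\psi$-\textit{TSLM}. For (ii) $\Rightarrow$ (i) I would upgrade the selection $M\subset\mathop{\arg\min}f_{u^*}$ to an equality on $B_{X^*}(0,\delta')$: the Fenchel--Young implication recorded in the Preliminaries gives $\mathop{\arg\min}_{z\in B_X[\bar x,r]}f_{u^*}(z)\subset\partial g^*(u^*)$, and since $g^*$ is Fr\'{e}chet differentiable at $u^*$ this subdifferential is the singleton $\{\triangledown g^*(u^*)\}=M(u^*)$; combined with (\ref{pro-s1}) this forces $\mathop{\arg\min}_{z\in B_X[\bar x,r]}f_{u^*}(z)=M(u^*)$. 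Substituting these equalities into the inclusion of (ii) produces exactly the weak $\psi$-\textit{TSLM} inclusion of (i) on $B_{X^*}(0,\delta')$.

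The main obstacle is the passage from (ii) to the single-valued, quantitative conclusions, since (ii) supplies only a one-sided (Aubin-type) inclusion for a selection $M$ of the solution map, and this does not imply lower semicontinuity, so Kenderov's Result K is not directly available. This is precisely what Proposition 4.3 (via the monotonicity of $\partial g^*$ and Lemma 4.2) is designed to overcome, turning the selection into the gradient of the convex conjugate. A minor point I would handle along the way is the nonemptiness of $\mathop{\arg\min}_{z\in B_X[\bar x,r]}f_{u^*}(z)$ in the infinite-dimensional setting: this is automatic because $M(u^*)$ is a nonempty singleton sandwiched inside it, so the containment used above is in fact an equality of singletons.
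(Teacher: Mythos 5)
Your proof is correct and matches the paper's intended argument: the paper states this corollary as immediate from Proposition 4.3 and Definition 1.1(ii), which is exactly your route --- (i)$\Rightarrow$(ii) and (iii)$\Rightarrow$(ii) via the trivial choices of $M$ (the full solution map, resp.\ the singleton-valued map), and (ii)$\Rightarrow$(i),(iii) by applying Proposition 4.3 with $\omega(t)=\kappa\psi(\tau t)$ and reading the Lipschitz-type modulus and the identity $\{\triangledown(f+\delta_{B_X[\bar x,r]})^*(u^*)\}=M(u^*)$ off its conclusion. Your added details (the Fenchel--Young argument forcing $\mathop{\arg\min}_{z\in B_X[\bar x,r]}f_{u^*}(z)=M(u^*)$ on the smaller ball, and the remark that Lemma 4.2 rather than Kenderov's Result K is what handles the failure of lower semicontinuity) are consistent with the paper's surrounding discussion.
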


In the special case when $\varphi(t)=t^2$ and $\psi(t)=t$, recall that Drusvyatskiy and Lewis \cite{DL} proved that a proper lower semicontinuous function $f$ has $\varphi$-{\it SLWP} at $\bar x$ if and only if $f$ has $\psi$-{\it TSLM} at $\bar x$. In the case when $\varphi(t)=t^\frac{1+p}{p}$ and $\psi(t)=t^p$ with $p>0$, it was proved in a recent paper \cite{zheng siam 2014} that $f$ has $\varphi$-{\it SLWP} at $\bar x$ if and only if $f$ has $\psi$-{\it TSLM} at $\bar x$. For two general admissible functions $\varphi$ and $\psi$, it is interesting to determine a relationship between $\varphi$ and $\psi$ which makes the corresponding $\varphi$-{\it SLWP} and $\psi$-{\it TSLM} equivalent. This motivates us to make the following conjecture: {\it if $\varphi$ is a differentiable convex admissible function and $\psi$ is the inverse function $(\varphi')^{-1}$ of $\varphi'$ then $f$ has $\varphi$-{\it SLWP} at $\bar x$ if and only if $f$ has $\psi$-{\it TSLM} at $\bar x$.} With the help of Lemma 4.1 and refining the proof of \cite[Theorem 5.1]{zheng siam 2014}, we can establish the following result which prove the above conjecture.

\begin{theorem}
\label{theorem4}
Let $\varphi :\mathbb{R}_+\rightarrow \mathbb{R}_+$ be a differentiable and strictly convex admissible function with $\varphi'(0)=0$. Let $f$ be a proper lower semicontinuous function on a Banach space $X$. Then, $f$ has $\varphi$-{\it SLWP} at $\bar x\in {\rm dom} (f)$  if and only if $f$ has  $(\varphi')^{-1}$-{\it TSLM} at $\bar x$.

\end{theorem}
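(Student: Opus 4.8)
The plan is to prove the two implications separately, since they draw on rather different tools. The governing intuition is the Fenchel-duality identity $(\varphi^*)'=(\varphi')^{-1}=\psi$: a $\varphi$-type growth of the tilted function $f_{u^*}$ around its minimizer is dual to a $C^{1,\omega}$-type smoothness of the conjugate whose gradient modulus is controlled by $\psi$. The forward implication ($\varphi$-\emph{SLWP} $\Rightarrow$ $(\varphi')^{-1}$-\emph{TSLM}) I would handle by an elementary monotonicity argument, whereas the converse will rest on the conjugate machinery assembled in Corollary 4.4, Proposition 4.3 and Lemma 4.1.

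For ``$\varphi$-\emph{SLWP} $\Rightarrow$ $(\varphi')^{-1}$-\emph{TSLM}'', first I would observe that, since $\varphi$ is strictly convex and admissible, $\varphi(s)>0$ for $s>0$, so the growth inequality (\ref{1.5}) forces $x_{u^*}$ to be the \emph{unique} minimizer of $f_{u^*}$ on $B_X[\bar x,r]$; hence $M(u^*):=x_{u^*}$ is a well-defined single-valued map with $M(0)=\bar x$ satisfying (\ref{1.1}). Writing $x_i:=x_{u_i^*}$ and substituting $x=x_2$ into (\ref{1.5}) for $u_1^*$ and $x=x_1$ into (\ref{1.5}) for $u_2^*$, the sum of the two inequalities cancels the $f$-terms and yields $2\varphi(\kappa\|x_1-x_2\|)\le\tau\langle u_1^*-u_2^*,x_1-x_2\rangle\le\tau\|u_1^*-u_2^*\|\,\|x_1-x_2\|$. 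Setting $s:=\kappa\|x_1-x_2\|$ and using the convexity estimate $\varphi(s)\ge\int_{s/2}^{s}\varphi'(r)\,dr\ge\tfrac{s}{2}\varphi'(s/2)$, I would divide by $s$ and invert the increasing function $\varphi'$ to reach $\tfrac{\kappa}{2}\|x_1-x_2\|\le(\varphi')^{-1}\!\big(\tfrac{\tau}{\kappa}\|u_1^*-u_2^*\|\big)$, which is precisely (\ref{1.4}) with $\psi=(\varphi')^{-1}$ and rescaled constants.

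For the converse ``$(\varphi')^{-1}$-\emph{TSLM} $\Rightarrow$ $\varphi$-\emph{SLWP}'', I would pass through the conjugate. By Corollary 4.4 and Proposition 4.3 the hypothesis supplies $\delta'>0$ such that $g^*:=(f+\delta_{B_X[\bar x,r]})^*$ is $C^{1,\omega}$ smooth on $B_{X^*}(0,\delta')$ with $\triangledown g^*(u^*)=M(u^*)$ and modulus of the form $\omega(s)=c\,\psi(\tau s)=c\,(\varphi')^{-1}(\tau s)$ (the constant $c$ coming from the \emph{TSLM} data). Applying Lemma 4.1 to the function $g^*$ on $E=X^*$ (after shrinking $\delta$ so that the enlarged radius $\delta+\omega^{-1}(2\omega(\delta))$ stays inside $B_{X^*}(0,\delta')$), and evaluating its conclusion at $x\in B_X[\bar x,r]\subset X\hookrightarrow X^{**}$, gives $(g^*)^*(x)\ge\langle u^*,x\rangle-g^*(u^*)+\int_0^{\|x-M(u^*)\|}\omega^{-1}(s)\,ds$. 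Since $f(x)=g(x)\ge g^{**}(x)=(g^*)^*(x)$ on $B_X[\bar x,r]$ and $g^*(u^*)=\langle u^*,M(u^*)\rangle-f(M(u^*))$, this rearranges to $f_{u^*}(x)-f_{u^*}(M(u^*))\ge\int_0^{\|x-M(u^*)\|}\omega^{-1}(s)\,ds$. Finally I would compute $\omega^{-1}(y)=\tfrac1\tau\varphi'(y/c)$, whence $\int_0^{a}\omega^{-1}(s)\,ds=\tfrac{c}{\tau}\varphi(a/c)$, turning the last estimate into $\varphi\big(\tfrac1c\|x-M(u^*)\|\big)\le\tfrac{\tau}{c}\big(f_{u^*}(x)-f_{u^*}(M(u^*))\big)$, that is (\ref{1.5}) with $x_{u^*}=M(u^*)$.

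The main obstacle is this converse direction, and specifically the correct and careful invocation of Lemma 4.1 along the duality chain $X$, $X^*$, $X^{**}$: one must read $g^*$ as the ``$g$'' of the lemma on $E=X^*$, keep track of the enlarged-ball radius condition so that the $C^{1,\omega}$ smoothness delivered by Proposition 4.3 actually covers it, and notice that the lemma only bounds the biconjugate $g^{**}=\overline{\rm co}\,g$ from below -- which nonetheless suffices because $f\ge g\ge g^{**}$ on the relevant ball. Alongside this, I would record the structural facts that make $\psi=(\varphi')^{-1}$ a genuine admissible function (continuity and strict monotonicity of $\varphi'$ from differentiability and strict convexity, together with $\varphi'(0)=0$) and check the elementary range conditions ensuring that $(\varphi')^{-1}$ is only ever applied where it is defined; these are routine once $u^*$ is confined to a sufficiently small ball.
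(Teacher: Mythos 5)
Your proposal is correct and takes essentially the same route as the paper's own proof: the necessity half is the identical cross-substitution of (\ref{1.5}) at the two tilted minimizers followed by the convexity estimate $\varphi(s)\ge\frac{s}{2}\varphi'\left(\frac{s}{2}\right)$, yielding $\frac{\kappa}{2}\|x_{u_1^*}-x_{u_2^*}\|\le(\varphi')^{-1}\big(\frac{\tau}{\kappa}\|u_1^*-u_2^*\|\big)$ exactly as in the paper, and the sufficiency half is the same duality chain --- Proposition 4.3 giving $C^{1,\omega}$ smoothness of $(f+\delta_{B_X[\bar x,r]})^*$ with $\omega(t)=\frac{1}{\kappa}(\varphi')^{-1}(\tau t)$ and $\triangledown(f+\delta_{B_X[\bar x,r]})^*=M$, then Lemma 4.1 applied on $E=X^*$ together with the computation $\int_0^{a}\omega^{-1}(s)\,ds=\frac{1}{\kappa\tau}\varphi(\kappa a)$ and the inequality $f\ge(f+\delta_{B_X[\bar x,r]})^{**}$ on the ball. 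One small imprecision: Lemma 4.1 delivers the lower bound only for $x$ in the small ball $B_{X^{**}}(\bar x,\omega(\delta_1))$, not for all $x\in B_X[\bar x,r]$ as you wrote, so, as in the paper's choice (\ref{N4.12}), one must pass to the shrunken radius $r_0:=\omega(\delta_1)<r$ and use (\ref{theore2}) to ensure $\|M(u^*)-\bar x\|<r_0$ for small $u^*$ --- a harmless fix, since the definition of $\varphi$-{\it SLWP} permits any radius.
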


\begin{proof} First suppose that $f$ has $(\varphi')^{-1}$-{\it TSLM} at $\bar x$. Then there exist $\delta, r,\kappa,\tau \in(0,+\infty)$ and  $M: B_{X^*}(0,\delta)\rightarrow B_X[\bar x, r]$  with $M(0)=\bar x$  such that
\begin{equation}\label{theore3}
 M(u^*)\in\mathop{\arg\min}_{z\in B_X[\bar x,r]}f_{u^*}(z)\quad \forall u^*\in B_{X^*}(0,\delta)
\end{equation}
and
\begin{equation}\label{theore2}
\kappa\|M(x^*)-M(u^*)\| \leq (\varphi')^{-1}(\tau\|x^*-u^*\|)\quad \forall x^*, u^*\in B_{X^*}(0,\delta).
\end{equation}
Let $\omega(t):=\frac{1}{\kappa}(\varphi')^{-1}(\tau t)$ for all $t\in\mathbb{R}_+$. Then, since $\varphi$ is a differentiable and strictly convex admissible function with $\varphi'(0)=0$, $\omega$ is a continuous increasing function such that $\omega(0)=0$. Hence, by (\ref{theore3}), (\ref{theore2}) and Proposition 4.3, there exists  $\delta'>0$ such  that  $(f+\delta_{B_X[\bar x, r]})^*$ is $C^{1,\omega}$ smooth  on $B_{X^*}(0,\delta')$  and $\triangledown(f+\delta_{B_X[\bar x, r]})^*(u^*)=M(u^*)$  for all $u^*\in B_{X^*}(0,\delta')$; hence  $B_{X^*}(0,\delta')\subset{\rm dom}((f+\delta_{B_X[\bar x,r]})^*)$. Take $\delta_1>0$ such that
\begin{equation}\label{N4.12}
\delta_1+\omega^{-1}(2\omega(\delta_1))<\delta'\;\;{\rm and}\;\;r_0:=\omega(\delta_1)<r.
\end{equation}
Then, by Lemma 4.1 (applied to $E=X^*$ and $g=(f+\delta_{B_X[\bar x, r]})^*$), one has
\begin{eqnarray}\label{N4.13}
(f+\delta_{B_X[\bar x, r]})^{**}(x)&\geq& \langle x,u^*\rangle-(f+\delta_{B_X[\bar x, r]})^{*}(u^*)+ \int_0^{\|x-M(u^*)\|} \omega^{-1} (s)ds\nonumber\\
&=& \langle x,u^*\rangle-(f+\delta_{B_X[\bar x, r]})^{*}(u^*)+ \frac{1}{\tau}\int_0^{\|x-M(u^*)\|} \varphi'(\kappa s)ds\nonumber\\
&=&\langle x,u^*\rangle-(f+\delta_{B_X[\bar x, r]})^{*}(u^*)+ \frac{1}{\kappa\tau}\varphi(\kappa\|x-M(u^*)\|)
\end{eqnarray}
for all $(u^*,x)\in B_{X^*}(0, \delta_1)\times B_X(\bar x,r_0)$. By (\ref{theore2}), one has
$$\kappa\|\bar x-M(u^*)\|=\kappa\|M(0)-M(u^*)\|\leq(\varphi')^{-1}(\tau\|u^*\|)\quad\forall u^*\in B_{X^*}(0,\delta),$$
and so there exists $\delta_0\in(0,\;\min\{\delta,\delta_1\})$ such that $\|\bar x-M(u^*)\|<r_0$ for all $u^*\in B_{X^*}(0,\delta_0)$. It follows from (\ref{theore3}) and (\ref{N4.12}) that \begin{equation}\label{N4.14}
M(u^*)\in \mathop{\arg\min}_{z\in B_X(\bar x,r_0)}f_{u^*}(z)\quad \forall u^*\in B_{X^*}(0,\delta_0).
\end{equation}
Since $(f+\delta_{B_X[\bar x, r]})^{**}(x) \leq (f+\delta_{B_X[\bar x, r]})(x)=f(x)$ for all $x\in B_X[\bar x, r]$,
(\ref{N4.13}) and the choice of $\delta_0$ imply that
\begin{equation}
f_{u^*}(x)+(f+\delta_{B_X[\bar x, r]})^{*}(u^*) \geq \frac{1}{\kappa\tau}\varphi(\kappa\|x-M(u^*)\|)\label{theore6}
\end{equation}
for all $x\in B_X(\bar x, r_0)$ and $u^*\in B_{X^*}(0, \delta_0)$. Noting (by (\ref{theore3})) that
$$
(f+\delta_{B_X[\bar x, r]})^{*}(u^*)=\langle u^*, M(u^*)\rangle-f(M(u^*))=-f_{u^*}(M(u^*))\quad \forall u^*\in B_{X^*}(0, \delta_0),
$$
it follows that
$$f_{u^*}(x)-f_{u^*}(M(u^*)) \geq \frac{1}{\kappa\tau}\varphi(\kappa\|x-M(u^*)\|)\quad\forall (x,u^*)\in B_X(\bar x, r_0)\times B_{X^*}(0, \delta_0).$$
This and  (\ref{N4.14}) imply that $f$ has $\varphi$-{\it SLWP} at $\bar x$. This shows that sufficiency part holds.

To prove the necessity part, suppose that $f$ has $\varphi$-{\it SLWP} at $\bar x$, namely there exist $\delta,r,\kappa,\tau\in(0,+\infty)$ such that for any $u^*\in B_{X^*}(0,\delta)$ there exists $x_{u^*}\in B_X[\bar x, r]$, with $x_0=\bar x$,  satisfying
\begin{equation}
\varphi (\kappa\|x-x_{u^*}\|)\leq  \tau(f_{u^*}(x)-f_{u^*}(x_{u^*}))\quad \forall x \in B_X[\bar x, r].\label{propo:1}
\end{equation}
Let $u_1^*, u_2^*\in B_{X^*}(0, \delta)$; by  (\ref{propo:1}), one has
\begin{eqnarray}\label{N}
2\varphi (\kappa\|x_{u_2^*}-x_{u_1^*}\|)&\leq&  \tau(f_{u_1^*}(x_{u_2^*})-f_{u_1^*}(x_{u_1^*})+f_{u_2^*}(x_{u_1^*})-f_{u_2^*}(x_{u_2^*}))\nonumber\\
&=&\tau\langle u_1^*-u_2^*, x_{u_1^*}-x_{u_2^*}\rangle\nonumber\\
&\leq& \tau\|u_1^*-u_2^*\| \cdot \|x_{u_1^*}-x_{u_2^*}\|.
\end{eqnarray}
Since $\varphi$ is a differentiable and strictly convex admissible function with $\varphi'(0)=0$, $\varphi'$ is a nonnegative increasing function on $\mathbb{R}_+$. Hence
\begin{eqnarray*}
\varphi(\kappa\|x_{u_1^*}-x_{u_2^*}\|)&\geq& \varphi(\kappa\|x_{u_1^*}-x_{u_2^*}\|)-\varphi\left(\frac{\kappa\|x_{u_1^*}-x_{u_2^*}\|}{2}\right)\\
&=&\int_0^1\varphi'\left(\frac{\kappa\|x_{u_1^*}-x_{u_2^*}\|(1+t)}{2}\right)\frac{\kappa\|x_{u_1^*}-x_{u^*_2}\|}{2}dt\\
&\geq&\varphi'\left(\frac{\kappa\|x_{u_1^*}-x_{u_2^*}\|}{2}\right)\frac{\kappa\|x_{u_1^*}-x_{u_2^*}\|}{2}.
\end{eqnarray*}
This and (\ref{N}) imply that $\varphi'\left(\frac{\kappa\|x_{u_2^*}-x_{u_1^*}\|}{2}\right)\leq \frac{\tau}{\kappa}\|u_1^*-u_2^*\|$ for all $u_1^*, u_2^*\in B_{X^*}(0, \delta)$, that is,
$$\frac{\kappa}{2}\|x_{u_1^*}-x_{u_2^*}\| \leq (\varphi')^{-1}\big(\frac{\tau}{\kappa}\|u_1^*-u_2^*\|\big)\quad\forall u_1^*, u_2^*\in B_{X^*}(0, \delta).$$
Noting (by (\ref{propo:1})) that
$$
\mathop{\arg\min}_{x\in B_X[\bar x, r]}{f_{u^*}(x)}=\{ x_{u^*}\} \quad \forall u^*\in B_{X^*}(0,\delta),
$$
It follows that $f$ has  $(\varphi')^{-1}$-{\it TSLM} at $\bar x$.
The proof is complete.
\end{proof}

With the help of Theorem 4.5 and Proposition 4.3, we now can prove Theorem 3.4.

{\bf {\it Proof of Theorem 3.4}.} By Theorem 4.5,
the  $\varphi$-{\it SLWP} assumption means that $f$ has $(\varphi')^{-1}$-{\it TSLM} at $\bar x$. Hence there exist $\delta,r,\kappa,\tau\in(0,\;+\infty)$ and a mapping $M:B_{X^*}(0,\delta)\rightarrow B_X[\bar x,r]$ with $M(0)=\bar x$ such that
$$
M(u^*)\in\mathop{\arg\min}_{x\in B_X[\bar x, r]}{f_{u^*}(x)}\;\;{\rm and}\;\;\kappa\|M(u^*)-M(v^*)\| \leq (\varphi')^{-1}(\tau\|u^*-v^*\|)$$
for all  $u^*,v^*\in B_{X^*}(0,\delta)$.
Let $h:=\overline{\rm co}(f+\delta_{B_X[\bar x, r]})$. Then, $h$ is a proper lower semicontinuous convex function, $h^*=(f+\delta_{B_X[\bar x, r]})^*$, and it follows from Proposition 4.3 (applied to $\omega(t)=\frac{1}{\kappa}(\varphi')^{-1}(\tau t)$) that there exists $\delta'\in(0,\;\delta)$ such that $h^*$ is smooth on $B_{X^*}(0,\delta')$, $\triangledown h^*(0)=x_0=\bar x$ and
\begin{equation}\label{4.18}
\|\triangledown h^*(v^*)-\triangledown h^*(u^*)\| \leq \frac{1}{\kappa}(\varphi')^{-1}(\tau\|v^*-u^*\|)\quad\forall v^*, u^*\in B_{X^*}(0,\delta').
\end{equation}
Hence
\begin{equation}\label{4.19}
\|\triangledown h^*(u^*)-\bar x\|\leq \frac{1}{\kappa}(\varphi')^{-1}(\tau\|u^*\|)\quad\forall u^*\in B_{X^*}(0,\delta').
\end{equation}
Note (by the convexity of $h$) that  $u^*\in \partial h(x)$ if and only if $x\in\partial h^*(u^*)$. One has
\begin{equation}\label{4.20}
(\partial h)^{-1}(u^*)=\{\triangledown h^*(u^*)\}\quad\forall u^*\in B_{X^*}(0,\delta').
\end{equation}
Thus, to complete the proof, it suffices to show that there exists $\kappa'>0$ such that
\begin{equation}\label{N4.21}
\;\;\varphi'(\kappa'\|x-\triangledown h^*(u^*)\|)\leq\tau d(u^*,\partial h(x))\quad\forall (x,u^*)\in B_X(\bar x,\frac{\delta'}{2})\times B_{X^*}(0,\frac{\delta'}{2}).
\end{equation}
Let $(x,u^*)\in B_X(\bar x,\frac{\delta'}{2})\times B_{X^*}(0,\frac{\delta'}{2})$.
Then, by (\ref{4.18})--(\ref{4.20}),
$$\|x-\triangledown h^*(u^*)\|\leq \|x-\bar x\|+\frac{1}{\kappa}(\varphi')^{-1}(\tau\|u^*\|)\leq\frac{\delta'}{2}+\frac{1}{\kappa}(\varphi')^{-1}(\frac{\tau\delta'}{2}),$$
$$\|x-\triangledown h^*(u^*)\|=\|\triangledown h^*(x^*)-\triangledown h^*(u^*)\|\leq\frac{1}{\kappa}(\varphi')^{-1}(\tau\|x^*-u^*\|)\;\;\forall x^*\in\partial h(x)\cap B_{X^*}(0,\delta')$$
and so
$$\|x-\triangledown h^*(u^*)\|\leq \frac{1}{\kappa}(\varphi')^{-1}(\tau d(u^*,\partial h(x)\cap B_{X^*}(0,\delta'))).$$
Therefore,
\begin{equation}\label{4.20'}
\quad \quad\|x-\triangledown h^*(u^*)\|\leq \frac{1}{\kappa}\min\{(\varphi')^{-1}(\tau d(u^*,\partial h(x)\cap B_{X^*}(0,\delta'))),\frac{\kappa\delta'}{2}+(\varphi')^{-1}(\frac{\tau\delta'}{2})\}.
\end{equation}
Since $d(u^*,\partial h(x)\cap (X^*\setminus B_{X^*}(0,\delta')))\geq d(u^*,X^*\setminus B_{X^*}(0,\delta'))\geq\frac{\delta'}{2}$,
$$d(u^*,\partial h(x))\geq \min\{d(u^*,\partial h(x)\cap B_{X^*}(0,\delta')),\frac{\delta'}{2}\}.$$
Hence
$$(\varphi')^{-1}(\tau d(u^*,\partial h(x)))\geq\min\{(\varphi')^{-1}(\tau d(u^*,\partial h(x)\cap B_{X^*}(0,\delta'))),(\varphi')^{-1}(\frac{\tau\delta'}{2})\}.$$
Letting $\beta:=\frac{\frac{\kappa\delta'}{2}+(\varphi')^{-1}(\frac{\tau \delta'}{2})}{(\varphi')^{-1}(\frac{\tau\delta'}{2})}$, it follows from (\ref{4.20'}) that
$$\|x-\triangledown h^*(u^*)\|\leq \frac{\beta}{\kappa}(\varphi')^{-1}(\tau d(u^*,\partial h(x))),$$
that is,
$$\varphi'\left(\frac{\kappa}{\beta}\|x-\triangledown h^*(u^*)\|\right)\leq {\tau}d(u^*,\partial h(x)).$$
This shows  that (\ref{N4.21}) holds with $\kappa'=\frac{\kappa}{\beta}$. The proof is complete.

\section{Stable weak well-posedness}
If a proper lower semicontinuous function $f$ has the  stable well-posedness at $\bar x$, then there exist $r,\delta\in(0,\;+\infty)$ such that
$\mathop{\arg\min}_{x\in B_X[\bar x, r]}{f_{u^*}(x)}$ is a singleton for any $u^*\in B_{X^*}(0,\delta)$. It is natural to consider the case when $\mathop{\arg\min}_{x\in B_X[\bar x, r]}{f_{u^*}(x)}$ is not a singleton. This yields the following notion.
\begin{definition}
Let $\varphi:\mathbb{R}_+\rightarrow\mathbb{R}_+$ be an admissible function and let  $f$ be a proper lower semicontinuous extended real-valued function on a Banach space $X$.  We say that $f$ has stable weak local well-posedness at $\bar x \in {\rm dom} (f)$ with respect to $\varphi$ (in brief, $\varphi$-{\it SWLWP}) if there exist $r, \gamma,\delta,\tau,\kappa\in(0,+\infty)$ such that $\min\limits_{x\in B_X[\bar x,r]}f(x)=f(\bar x)$ and
\begin{equation}\label{5.17}
\varphi\big(\tau d\big(x,\mathop{\arg\min}_{x\in B_X[\bar x, r]}{f_{u^*}(x)}\big)\big)\leq \kappa(f_{u^*}(x)-\min\limits_{z\in B_{X}[\bar x,r]}f_{u^*}(z))
\end{equation}
for all $(x,u^*)\in B_X(\bar x,\gamma)\times B_{X^*}(0,\delta)$.
\end{definition}\\

Given an increasing admissible function $\varphi$, it is clear that the corresponding well-posedness implies the weak well-posedness. The following example shows that the converse implication is not true. Let $f:\mathbb{R}\rightarrow\mathbb{R}$ be such that $f(t)=0$ for all $t\in(-\infty,0]$ and $f(t)=\varphi(t)$ for all $t\in(0,\;+\infty)$. Then $$\mathop{\arg\min}_{t\in\mathbb{R}}f(t)=(-\infty,\;0]\;\;{\rm  and}\;\; \varphi(d(x,\mathop{\arg\min}_{t\in\mathbb{R}}f(t))=f(x)-\min\limits_{t\in\mathbb{R}}f(t)\;\;\forall x\in\mathbb{R}.$$
Hence, $f$ has the weak well-posedness but does not have the well-posedness because $\mathop{\arg\min}_{t\in\mathbb{R}}f(t)$ is not a singleton. Nevertheless, the following theorem shows that the corresponding stable well-posedness and stable weak well-posedness are equivalent when $f$ undergoes small tilt perturbations, which was proved by Zheng and Ng \cite{zheng nonlinear 2014} in the case when $\varphi(t)=t^q$.

\begin{theorem}
\label{stable tilt}
Let $\varphi:\mathbb{R}_+\rightarrow\mathbb{R}_+$ be a differentiable and strictly convex admissible function such that $\varphi'(0)=0$. Let $X$  be a Banach space and $f:X\rightarrow\mathbb{R}\cup\{+\infty\}$ be a proper lower semicontinuous function. Then, $f$ has $\varphi$-{\it SLWP} at $\bar x\in {\rm dom} (f)$ if and only if $f$ has $\varphi$-{\it SWLWP} at $\bar x$.
\end{theorem}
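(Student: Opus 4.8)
The plan is to prove the two implications separately; the forward direction is essentially bookkeeping, while the converse is the substantial one.

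First I would dispose of the easy direction. Suppose $f$ has $\varphi$-\textit{SLWP} at $\bar x$ with constants $\delta,r,\tau,\kappa$. Setting $u^*=0$ and $x_0=\bar x$ in (1.5) gives $f(\bar x)=\min_{x\in B_X[\bar x,r]}f(x)$. Next, for each $u^*\in B_{X^*}(0,\delta)$ the inequality (1.5) forces $x_{u^*}$ to be the \emph{unique} minimizer of $f_{u^*}$ over $B_X[\bar x,r]$: any other minimizer $x'$ would give $\varphi(\kappa\|x'-x_{u^*}\|)\le\tau(f_{u^*}(x')-f_{u^*}(x_{u^*}))=0$, forcing $x'=x_{u^*}$ since $\varphi$ is strictly increasing on $(0,\infty)$. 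Hence $\mathop{\arg\min}_{z\in B_X[\bar x,r]}f_{u^*}(z)=\{x_{u^*}\}$, so $d(x,\mathop{\arg\min})=\|x-x_{u^*}\|$ and $\min_z f_{u^*}(z)=f_{u^*}(x_{u^*})$. Substituting these identities into (1.5) and taking $\gamma\le r$ turns (1.5) into (5.17) (with the constants $\kappa$ and $\tau$ exchanged), so $f$ has $\varphi$-\textit{SWLWP} at $\bar x$.

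For the converse, assume $f$ has $\varphi$-\textit{SWLWP} at $\bar x$ with constants $r,\gamma,\delta,\tau,\kappa$, and set $M(u^*):=\mathop{\arg\min}_{z\in B_X[\bar x,r]}f_{u^*}(z)$ for $u^*\in B_{X^*}(0,\delta)$; note $\bar x\in M(0)$ and $M(u^*)\subset B_X[\bar x,r]$. The heart of the argument is to show that $M$ obeys the set-valued estimate of Corollary 4.4(ii). Fix $u_1^*,u_2^*\in B_{X^*}(0,\delta)$, take $x_2\in M(u_2^*)\cap B_X(\bar x,\gamma)$ and any $x_1\in M(u_1^*)$. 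Using minimality of $x_1$ for $f_{u_1^*}$ and of $x_2$ for $f_{u_2^*}$, a direct computation gives $f_{u_1^*}(x_2)-\min_z f_{u_1^*}(z)=[f(x_2)-f(x_1)]-\langle u_1^*,x_2-x_1\rangle\le\langle u_2^*-u_1^*,x_2-x_1\rangle\le\|u_1^*-u_2^*\|\,\|x_2-x_1\|$. Applying (5.17) at $x_2$ (legitimate since $x_2\in B_X(\bar x,\gamma)$) and taking the infimum over $x_1\in M(u_1^*)$ yields the self-referential inequality $\varphi(\tau\,d(x_2,M(u_1^*)))\le\kappa\|u_1^*-u_2^*\|\,d(x_2,M(u_1^*))$.

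It then remains to invert this inequality. Writing $d:=d(x_2,M(u_1^*))$ and using that $\varphi$ is differentiable and strictly convex with $\varphi'$ nonnegative and increasing, the convexity estimate $\varphi(\tau d)\ge\varphi(\tau d)-\varphi(\tau d/2)\ge\varphi'(\tau d/2)\cdot\tfrac{\tau d}{2}$ combines with the previous display to give $\varphi'(\tau d/2)\le\tfrac{2\kappa}{\tau}\|u_1^*-u_2^*\|$, whence $d(x_2,M(u_1^*))\le\tfrac{2}{\tau}(\varphi')^{-1}\!\big(\tfrac{2\kappa}{\tau}\|u_1^*-u_2^*\|\big)$. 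Since $\varphi'$ is continuous and increasing with $\varphi'(0)=0$, the function $\psi:=(\varphi')^{-1}$ is admissible and continuous at $0$, and the last estimate is exactly the inclusion $M(x^*)\cap B_X(\bar x,\gamma)\subset M(u^*)+\tfrac{2}{\tau}\psi\big(\tfrac{2\kappa}{\tau}\|x^*-u^*\|\big)B_X$ required in Corollary 4.4(ii). Therefore Corollary 4.4 gives that $f$ has $(\varphi')^{-1}$-\textit{TSLM} at $\bar x$, and Theorem 4.5 then upgrades this to $\varphi$-\textit{SLWP} at $\bar x$, completing the proof. I expect the main obstacle to be precisely the two-perturbation comparison producing the inequality $\varphi(\tau d)\le\kappa\|u_1^*-u_2^*\|\,d$ and its inversion through $(\varphi')^{-1}$; once the argmin multifunction is shown to have this modulus of single-valuedness, the reduction to $(\varphi')^{-1}$-\textit{TSLM} via Corollary 4.4 and thence to $\varphi$-\textit{SLWP} via Theorem 4.5 is automatic.
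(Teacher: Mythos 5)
Your proof is correct, and its core computation is the same as the paper's, but the packaging differs in two respects worth recording. The heart of both arguments is extracting from (5.17) the self-referential inequality $\varphi\big(\tau\, d(x_2,M(u_1^*))\big)\le \kappa\,\|u_1^*-u_2^*\|\, d(x_2,M(u_1^*))$ and inverting it via the convexity estimate $\varphi(t)\ge \varphi(t)-\varphi(t/2)\ge \varphi'(t/2)\,t/2$; this is verbatim the paper's step, and your derivation of the inequality is in fact slightly leaner: you apply (5.17) only once, at $x_2\in M(u_2^*)\cap B_X(\bar x,\gamma)$, and use bare minimality of $x_2$ for $f_{u_2^*}$, whereas the paper applies (5.17) a second time at near-minimizers $v_n\in\mathcal{M}(u^*)$ and therefore must first prove $d(\bar x,\mathcal{M}(u^*))<\gamma'$ and shrink radii ($\gamma'=\frac14\min\{r,\gamma\}$ and a new $\delta'$) to keep the $v_n$ inside $B_X(\bar x,\gamma)$ --- bookkeeping your route avoids. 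The endgame also differs: the paper feeds the resulting inclusion directly into Proposition 4.3 (via the Kenderov-type Lemma 4.2) to get single-valuedness of the argmin map near $0$, after which (5.17) literally becomes the SLWP inequality; you instead pass through Corollary 4.4 to get $(\varphi')^{-1}$-TSLM and then invoke Theorem 4.5. Since Corollary 4.4 and Theorem 4.5 are proved independently of Theorem 5.2, there is no circularity; the paper's finish is more economical (it rereads (5.17) rather than re-deriving growth through the conjugate-function machinery of Lemma 4.1 inside Theorem 4.5), while yours makes the position of Theorem 5.2 within the cycle of equivalences in Corollary 5.3 more transparent. Your treatment of the easy direction matches what the paper dismisses as trivial and is fine.

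One small repair is needed: from $d(x_2,M(u_1^*))\le \frac{2}{\tau}(\varphi')^{-1}\big(\frac{2\kappa}{\tau}\|u_1^*-u_2^*\|\big)$ you cannot in general conclude $x_2\in M(u_1^*)+\frac{2}{\tau}(\varphi')^{-1}\big(\frac{2\kappa}{\tau}\|u_1^*-u_2^*\|\big)B_X$, because in an infinite-dimensional Banach space the distance to the closed set $M(u_1^*)$ need not be attained, and membership in $M+\rho B_X$ requires an actual $m\in M$ with $\|x_2-m\|\le\rho$. This is precisely why the paper inflates the factor from $\frac{2}{\tau}$ to $\frac{3}{\tau}$ when it records the inclusion (5.19) with $\omega(t)=\frac{3}{\tau}(\varphi')^{-1}\big(\frac{2\kappa t}{\tau}\big)$. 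The fix costs you nothing, since Corollary 4.4(ii) only asks for the inclusion with some constants: replace $\frac{2}{\tau}$ by $\frac{3}{\tau}$ (or any factor strictly larger than $\frac{2}{\tau}$, noting $(\varphi')^{-1}(s)>0$ for $s>0$, and handling $u_1^*=u_2^*$ by closedness of the argmin set), and the rest of your argument stands unchanged.
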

\begin{proof}
The necessity part is trivial. For  the sufficiency part,  suppose that $f$ has $\varphi$-{\it SWLWP} at $\bar x$. Then there exist $r, \gamma, \delta,\tau,\kappa\in(0,+\infty)$ such that $\min\limits_{x\in B_X[\bar x,r]}f(x)=f(\bar x)$ and (\ref{5.17}) holds. Letting
$$\mathcal{M}(u^*):=\mathop{\arg\min}_{z\in B_X[\bar x,r]}f_{u^*}(z)\quad\forall u^*\in X^*,$$
it suffices to show that  $\mathcal{M}(u^*)$ is a singleton for each  $u^*\in X^*$ close to 0. Let $\gamma':=\frac{1}{4}\min\{r,\gamma\}$ and $\delta':=\min\{\delta,\frac{\varphi(\tau \gamma')}{\kappa r}\}$. Then, by Proposition 4.3, we only need to show that there exists  a continuous function $\omega:\mathbb{R}_+\rightarrow\mathbb{R}_+$ with $\omega(0)=0$ such that
\begin{equation}\label{5.19}
\mathcal{M}(u^*)\cap B_X(\bar x,\gamma')\subset \mathcal{M}(v^*)+\omega (\|u^*-v^*\|)B_X\quad\forall u^*,v^*\in B_{X^*}(0,\delta').
\end{equation}
By (\ref{5.17}), one has
\begin{eqnarray*}
\varphi \left(\tau d\left(\bar x,\mathcal{M}(u^*)\right)\right)&\leq&  \kappa(f_{u^*}(\bar x)-\min_{z\in B_X[\bar x, r]}f_{u^*}(z))\\
&=&\kappa(\min_{z\in B_X[\bar x, r]}f(z)-\min_{z\in B_X[\bar x, r]}(f(z)-\langle u^*,z-\bar x\rangle))\\
&\leq& \kappa(\min_{z\in B_X[\bar x, r]}f(z)-\min_{z\in B_X[\bar x, r]}(f(z)-\|u^*\|r))\\
&=&\kappa\|u^*\|r<\varphi(\tau \gamma')
\end{eqnarray*}
for all $u^*\in B_{X^*}(0,\delta')$. Since a strictly convex admissible function is increasing,
$$d\left(\bar x,\mathcal{M}(u^*)\right)<\gamma'\quad\forall u^*\in B_{X^*}(0,\delta'),$$
namely, for any $u^*\in B_{X^*}(0,\delta')$ there exists $x_{u^*}\in \mathcal{M}(u^*)$ such that
\begin{equation}\label{theo:3}
\|x_{u^*}-\bar x\|<\gamma'.
\end{equation}
Let $u^*, v^*\in B_{X^*}(0,\delta')$ and $u\in \mathcal{M}(u^*)\cap B_X(\bar x, \gamma')$, and take a sequence $\{v_n\}$ in $\mathcal{M}(v^*)$ such that
\begin{equation}
\lim\limits_{n\rightarrow\infty}\|u-v_n\|=d\left(u,\mathcal{M}(v^*)\right).\label{theo:4}
\end{equation}
Noting (by (\ref{theo:3})) that
$$d\left(u,\mathcal{M}(v^*)\right)\leq\|u-x_{v^*}\|\leq\|u-\bar x\|+\|\bar x-x_{v^*}\|<2\gamma',$$
we can assume without loss of generality that $\|u-v_n\|<2\gamma'$ for all $n\in\mathbb{N}$, and so
$$\|v_n-\bar x\|\leq\|v_n-u\|+\|u-\bar x\|<3\gamma'<\gamma\quad\forall n\in\mathbb{N}.$$
Thus,
by (\ref{5.17}), one has
\begin{eqnarray*}
\varphi \left(\tau d\left(u,\mathcal{M}(v^*)\right)\right)&\leq&\kappa(f_{v^*}(u)-\min\limits_{z\in B_X[\bar x,r]}f_{v^*}(z))\\
&=&  \kappa(f_{v^*}(u)-f_{v^*}(v_n))
\end{eqnarray*}
and
$$
\varphi\left(\tau d\left(v_n,\mathcal{M}(u^*)\right)\right)\leq  \kappa(f_{u^*}(v_n)-f_{u^*}(u))
$$
for all $n\in\mathbb{N}$. Therefore,
\begin{eqnarray*}
\varphi \left(\tau d\left(u,\mathcal{M}(v^*)\right)\right)&\leq&
\varphi \left(\tau d\left(u,\mathcal{M}(v^*)\right)\right)+
\varphi\left(\tau d\left(v_n,\mathcal{M}(u^*)\right)\right)\\
&\leq&\kappa(f_{v^*}(u)-f_{v^*}(v_n)+f_{u^*}(v_n)-f_{u^*}(u))\\
&=&\kappa\langle u^*-v^*,u-v_n\rangle\\
&\leq&\kappa\|u^*-v^*\|\|u-v_n\|
\end{eqnarray*}
for all $n\in\mathbb{N}$. This and (\ref{theo:4}) imply  that
$$\varphi \left(\tau d\left(u,\mathcal{M}(v^*)\right)\right)\leq \kappa \|u^*-v^*\|d\left(u,\mathcal{M}(v^*)\right).$$
Noting that $\varphi(t)\geq\varphi(t)-\varphi(\frac{t}{2})\geq\varphi'(\frac{t}{2})\frac{t}{2}$ for all $t\in\mathbb{R}_+$, it follows that
$$\varphi'\left(\frac{\tau}{2}d\left(u,\mathcal{M}(v^*)\right)\right)\leq \frac{2\kappa}{\tau} \|u^*-v^*\|,$$
that is,
$$d\left(u,\mathcal{M}(v^*)\right)\leq\frac{2}{\tau}(\varphi')^{-1}(\frac{2\kappa}{\tau} \|u^*-v^*\|).$$
This implies that
$$u\in \mathcal{M}(v^*)+\frac{3}{\tau}(\varphi')^{-1}(\frac{2\kappa}{\tau} \|u^*-v^*\|)B_X.$$
Hence
$$\mathcal{M}(u^*)\cap B_X(\bar x,\gamma')\subset \mathcal{M}(v^*)+\frac{3}{\tau}(\varphi')^{-1}(\frac{2\kappa}{\tau} \|u^*-v^*\|)B_X.$$
This shows that (\ref{5.19}) holds with $\omega(t)=\frac{3}{\tau}(\varphi')^{-1}(\frac{2\kappa t}{\tau})$. The proof is complete.
\end{proof}

\begin{corollary}
Let $\varphi:\mathbb{R}_+\rightarrow\mathbb{R}_+$ be a differentiable and strictly convex admissible function such that $\varphi'(0)=0$. Let $X$  be a Banach space and $f:X\rightarrow\mathbb{R}\cup\{+\infty\}$ be a lower semicontinuous function with $\bar x\in{\rm dom}(f)$. Consider the following statements:\\
 (i) $f$ has $\varphi$-{\it SLWP} at $\bar x$.\\
 (ii) $f$ has $\varphi$-{\it SWLWP} at $\bar x$.\\
 (iii) $f$ has  $(\varphi')^{-1}$-{\it TSLM} at $\bar x$.\\
 (iv) $f$ has weak $(\varphi')^{-1}$-{\it TSLM} at $\bar x$.\\
 (v) $\partial f$ is strongly metrically $\varphi'$-regular at $(\bar x, 0 )$.\\
 (vi) $\partial f$ is metrically $\varphi'$-regular at $(\bar x,0)$.\\
Then, $(i)\Leftrightarrow (ii)\Leftrightarrow (iii)\Leftrightarrow(iv)\Leftarrow (v)\Rightarrow (vi)$. If, in addition, $f$ is convex, then $(i)\Leftrightarrow (ii)\Leftrightarrow (iii)\Leftrightarrow(iv)\Leftrightarrow (v)\Leftrightarrow (vi)$.
\end{corollary}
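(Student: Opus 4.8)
The plan is to obtain the corollary almost entirely by assembling results already proved, the only genuinely new ingredient being the passage from metric to strong metric $\varphi'$-regularity in the convex setting.

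First I would collect the implications supplied by the earlier theorems. Theorem 4.5 gives $(i)\Leftrightarrow(iii)$. Since $\varphi$ is differentiable, strictly convex and admissible with $\varphi'(0)=0$, its derivative $\varphi'$ is continuous and strictly increasing with $\varphi'(0)=0$, so $\psi:=(\varphi')^{-1}$ is a continuous admissible function with $\lim_{t\to0^{+}}\psi(t)=\psi(0)=0$; hence Corollary 4.4 applied to this $\psi$ yields $(iii)\Leftrightarrow(iv)$. Theorem 5.2 gives $(i)\Leftrightarrow(ii)$, and Theorem 3.3 (with $\varphi'_{+}=\varphi'$ by differentiability, and recalling that $\bar x$ is taken to be a local minimizer of $f$, as is the case once any of (i)--(iv) is in force) gives $(v)\Rightarrow(i)$. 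Finally $(v)\Rightarrow(vi)$ is immediate from Definition 1.3, since strong metric $\varphi'$-regularity already contains the metric $\varphi'$-regularity inequality (1.6). Together these establish the general assertion $(i)\Leftrightarrow(ii)\Leftrightarrow(iii)\Leftrightarrow(iv)\Leftarrow(v)\Rightarrow(vi)$.

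For the convex case I would first invoke Corollary 3.5 to upgrade $(v)\Rightarrow(i)$ to the equivalence $(i)\Leftrightarrow(v)$, so that $(i)$--$(v)$ become mutually equivalent. It then remains only to close the loop through $(vi)$, that is, to prove $(vi)\Rightarrow(v)$ under convexity, and this is the step I expect to be the main obstacle, since metric regularity by itself never forces local single-valuedness. The idea is to extract single-valuedness from monotonicity. For convex $f$ one has $(\partial f)^{-1}=\partial f^{*}$, which is a monotone mapping from $X^{*}$ into $X^{**}$ whose values lie in $X$. Assuming $(vi)$, fix $y_1,y_2$ near $0$ and $x_1\in(\partial f)^{-1}(y_1)$ near $\bar x$; then $y_1\in\partial f(x_1)$, so $d(y_2,\partial f(x_1))\le\|y_1-y_2\|$, and the metric $\varphi'$-regularity inequality applied at $(x_1,y_2)$ gives
$$\varphi'\big(\tau\, d(x_1,(\partial f)^{-1}(y_2))\big)\le\kappa\, d(y_2,\partial f(x_1))\le\kappa\|y_1-y_2\|,$$
whence
$$d\big(x_1,(\partial f)^{-1}(y_2)\big)\le\frac{1}{\tau}(\varphi')^{-1}\big(\kappa\|y_1-y_2\|\big).$$
Setting $\omega(t):=\frac{2}{\tau}(\varphi')^{-1}(\kappa t)$ (the factor $2$ converts this distance estimate into an honest inclusion because $\partial f^{*}$ is closed-valued), this is precisely the Aubin-type property (4.4) for $F=(\partial f)^{-1}$ at $(0,\bar x)$ with a continuous $\omega$ vanishing at $0$.

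I would then apply Lemma 4.2 to the monotone mapping $(\partial f)^{-1}=\partial f^{*}$ to conclude that $(\partial f)^{-1}(y)$ is a singleton for every $y$ in some ball $B_{X^{*}}(0,\delta')$. Moreover, taking $x_1=\bar x\in(\partial f)^{-1}(0)$ (legitimate because $0\in\partial f(\bar x)$ is built into $(vi)$) and $y_2=y$ in the estimate above gives $d(\bar x,(\partial f)^{-1}(y))\le\omega(\|y\|)\to0$, so this unique value lies in $B_X(\bar x,\delta)$ for $y$ small. Combined with the regularity inequality retained from $(vi)$, this is exactly strong metric $\varphi'$-regularity, so $(vi)\Rightarrow(v)$, and the full equivalence $(i)\Leftrightarrow(ii)\Leftrightarrow(iii)\Leftrightarrow(iv)\Leftrightarrow(v)\Leftrightarrow(vi)$ in the convex case follows. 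The delicate point throughout is this last argument: it is the monotonicity of the subdifferential, fed into Lemma 4.2 via the nonlinear Aubin estimate, that supplies the single-valuedness which metric regularity alone cannot.
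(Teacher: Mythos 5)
Your proposal is correct and follows essentially the same route as the paper's proof: the nonconvex chain is assembled from Theorems 5.2, 4.5, 3.3 and Corollary 4.4, the convex upgrade $(i)\Leftrightarrow(v)$ comes from Corollary 3.5, and $(vi)\Rightarrow(v)$ is obtained exactly as in the paper by feeding the Aubin-type estimate with $\omega(t)=\frac{2}{\tau}(\varphi')^{-1}(\kappa t)$ into Lemma 4.2 applied to the monotone mapping $(\partial f)^{-1}=\partial f^{*}$. The only cosmetic difference is that the paper verifies the inclusion (4.4) by a contradiction argument while you derive it directly from the regularity inequality (where the factor $2$ alone does the job, with no need to invoke closed-valuedness of $\partial f^{*}$).
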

\begin{proof}
$(i)\Leftrightarrow (ii)\Leftrightarrow (iii)\Leftrightarrow(iv)\Leftarrow (v)$ are immediate from Theorems 5.2, 4.5 and 3.4 and  Corollary 4.4, while (v)$\Rightarrow$(vi) is trivial.

Now suppose that $f$ is convex. Since (i)$\Rightarrow$(v) is immediate from Corollary 3.5, it suffices  to show (vi)$\Rightarrow$(v).  By (vi), take $\tau,\kappa,r\in(0,\;+\infty)$ such that
\begin{equation}\label{4.6n}
\varphi'(\tau d(x,(\partial f)^{-1}(x^*)))\leq\kappa d(x^*,\partial f(x))\quad\forall (x,x^*)\in B_X(\bar x,r)\times B_{X^*}(0,r).
\end{equation}
Thus,  by Lemma 4.2 (applied to $F=(\partial f)^{-1}$), we only need  to show that there exist $\gamma,\delta\in(0,\;+\infty)$ such that
$$(\partial f)^{-1}(x^*)\cap B_X(\bar x,\gamma)\subset(\partial f)^{-1}(u^*)+\omega(\|x^*-u^*\|)B_X\quad\forall x^*,u^*\in B_{X^*}(0,\delta),$$
where $\omega(t)=\frac{2}{\tau}(\varphi')^{-1}(\kappa t)$ for all $t\in\mathbb{R}_+$. To do this, suppose to the contrary that there exists a sequence $(x_n^*,u_n^*,x_n)\longrightarrow(0,0,\bar x)$ such that
$$x_n\in(\partial f)^{-1}(x_n^*)\;\;{\rm and}\;\;x_n\not\in(\partial f)^{-1}(u_n^*)+\omega(\|x_n^*-u_n^*\|)B_X\quad\forall n\in\mathbb{N}.$$
It follows from  (\ref{4.6n}) that
$$\varphi'(\tau d(x_n,(\partial f)^{-1}(u_n^*)))\leq \kappa d(u_n^*,\partial f(x_n))\leq\kappa\|u_n^*-x_n^*\|,$$
and so
$d(x_n,(\partial f)^{-1}(u_n^*))\leq \frac{1}{\tau}(\varphi')^{-1}(\kappa\|u_n^*-x_n^*\|)$ for all sufficiently large $n$. This and the definition of $\omega$ imply that
$x_n\in(\partial f)^{-1}(u_n^*)+\omega(\|u_n^*-x_n^*\|)B_X$ for all sufficiently large $n$, a contradiction.   The proof is complete.
\end{proof}

\section{Second order condition}
In this section, in the finite dimension setting, we provide a sufficient condition for stable well-posedness in terms of the second subdifferential. Throughout this section,  $f$ is a proper lower semicontinuous function on $\mathbb{R}^n$; let $\partial f$ denote Mordukhovich's limiting subdifferential of $f$ and $\tilde{N}(\partial f,\cdot)$ denote Mordukhovich's limiting normal cone of $\partial f$ (see \cite{M} for its detail). For $(x,v)\in{\rm gph}(\partial f)$, adopting Mordukhovich's construction,  the second subdifferential $\partial^2f(x,v)$ of $f$ at $(x,v)$ is defined as
$$\partial^2 f(x,v)(h)=\{z\in \mathbb{R}^n:\;(z,-h)\in \tilde N({\rm gph}(\partial f), (x,v))\}\quad\forall h\in\mathbb{R}^n$$
(see \cite[Definition 2.2]{MN2}). For a convex admissible function $\psi$, let
$$\eta_{\psi}(x,v)(h):=\psi'_+(d(x,(\partial f)^{-1}(v-h)))$$
for all $(x,v,h)\in{\rm gph}(\partial f)\times\mathbb{R}^n$.

\begin{proposition}
Let $\psi$ be a convex admissible function and let $(\bar x,0)\in{\rm gph}(\partial f)$.  Suppose that ${\rm gph}(\partial f)$ is closed and that there exist $\kappa,r\in(0,\;+\infty)$ such that
\begin{equation}\label{6.1}
\kappa\|h\|^2\eta_{\psi}(x,v)(h)\leq\langle z,h\rangle
\end{equation}
for all $(x,v,h)\in({\rm gph}(\partial f)\times\mathbb{R}^n)\cap (B(\bar x,r)\times B(0,r)\times B(0,r))$ and $z\in\partial^2f(x,v)(h)$. Then $\partial f$ is metrically $\psi$-regular at $(\bar x,0)$.
\end{proposition}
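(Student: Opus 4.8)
The plan is to argue by contradiction, turning a failure of metric $\psi$-regularity into a second-order object that violates the generalized positive definiteness (\ref{6.1}). Fix once and for all a number $\tau\in(0,\frac12]$. If $\partial f$ were not metrically $\psi$-regular at $(\bar x,0)$, then the triple $(\tau,k,\frac1k)$ would fail for every $k\in\mathbb{N}$, so there would be $(x_k,y_k)\in B(\bar x,\frac1k)\times B(0,\frac1k)$ with
\[
\psi(\tau D_k)>k\,s_k,\qquad D_k:=d\big(x_k,(\partial f)^{-1}(y_k)\big),\quad s_k:=d\big(y_k,\partial f(x_k)\big).
\]
In particular $y_k\notin\partial f(x_k)$ and $s_k\to0$.

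The decisive idea is to extract from this a coderivative element whose \emph{direction} is the (small) residual itself, not a normalized vector, so that it falls inside the range $B(0,r)$ on which (\ref{6.1}) is assumed. I would therefore apply Ekeland's variational principle to the lower semicontinuous merit function $\theta_k(u):=\frac12 d(y_k,\partial f(u))^2$ (lower semicontinuity comes from the closedness of ${\rm gph}(\partial f)$), with a parameter $\lambda_k$ to be tuned. This yields a point $\hat x_k$ close to $x_k$, minimizing $\theta_k(\cdot)+\lambda_k\|\cdot-\hat x_k\|$, together with $\hat v_k\in\partial f(\hat x_k)$ attaining the distance, so that $h_k:=\hat v_k-y_k$ has $\|h_k\|=d(y_k,\partial f(\hat x_k))\le s_k\to0$. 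As the smooth model $\theta_k(u)=\frac12\|\nabla f(u)-y_k\|^2$, $\nabla\theta_k(u)=\nabla^2f(u)h_k$ indicates, a chain rule for the distance to the moving closed-graph multifunction $\partial f(\cdot)$ then converts the subgradient optimality condition into an inclusion $z_k\in\partial^2 f(\hat x_k,\hat v_k)(h_k)$ with $\|z_k\|\le\lambda_k$. Since $\hat x_k\to\bar x$, $\hat v_k=y_k+h_k\to0$ and $\|h_k\|\to0$, the triple $(\hat x_k,\hat v_k,h_k)$ eventually lies where (\ref{6.1}) is valid.

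Next I would feed $z_k$ into (\ref{6.1}) and use the algebraic coincidence $\hat v_k-h_k=y_k$, which turns the weight $\eta_\psi(\hat x_k,\hat v_k)(h_k)=\psi'_+(d(\hat x_k,(\partial f)^{-1}(\hat v_k-h_k)))$ into $\psi'_+(d(\hat x_k,(\partial f)^{-1}(y_k)))$, exactly the quantity governing the failure. Combining (\ref{6.1}) with $\langle z_k,h_k\rangle\le\|z_k\|\,\|h_k\|\le\lambda_k\|h_k\|$ yields
\[
\kappa\,\|h_k\|\,\psi'_+\big(d(\hat x_k,(\partial f)^{-1}(y_k))\big)\le\lambda_k .
\]
Tuning $\lambda_k$ so that $\hat x_k$ stays within $\frac12 D_k$ of $x_k$ keeps $d(\hat x_k,(\partial f)^{-1}(y_k))\ge\frac12 D_k>0$, so by (\ref{3.1}) the weight on the left is bounded below by $\psi'_+(\frac12 D_k)>0$; comparing with the failure inequality $\psi(\tau D_k)>k\,s_k$ through the convexity estimates $\psi(t)\le t\psi'_+(t)$ and $\psi(t)\ge\frac t2\psi'_+(\frac t2)$ (and $\tau\le\frac12$) forces the right-hand side to be negligible against the left, the sought contradiction.

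The heart of the matter, and the main obstacle, lies entirely in the step from Ekeland stationarity to the coderivative inclusion and in the attendant scale bookkeeping. On one hand one must justify the nonsmooth chain rule writing a subgradient of $u\mapsto\frac12 d(y_k,\partial f(u))^2$ as an element of $\partial^2 f(\hat x_k,\hat v_k)(h_k)=D^*(\partial f)(\hat x_k,\hat v_k)(h_k)$ with multiplier the residual $h_k$; this is where the closedness of ${\rm gph}(\partial f)$ and finite-dimensionality (outer semicontinuity of the limiting normal cone, local boundedness needed for lower semicontinuity of $\theta_k$) enter. On the other hand one must balance the Ekeland radius against $\lambda_k$ so as to keep $\hat x_k\in B(\bar x,r)$ and $h_k\in B(0,r)$ while $d(\hat x_k,(\partial f)^{-1}(y_k))$ stays bounded below and $\lambda_k/\|h_k\|\to0$; if the global preimage misbehaves this is where one would replace $(\partial f)^{-1}(y)$ by the localized $(\partial f)^{-1}(y)\cap B(\bar x,\rho)$ to force $D_k\to0$. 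Once these are secured, the positivity and monotonicity of $\psi'_+$ recorded in (\ref{3.1}) close the argument.
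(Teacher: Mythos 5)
Your skeleton — argue by contradiction, run Ekeland, and exploit the algebraic coincidence $\hat v_k-h_k=y_k$ so that the weight $\eta_\psi$ becomes $\psi'_+\big(d(\hat x_k,(\partial f)^{-1}(y_k))\big)$ — is indeed the core of the paper's argument, but two steps fail as written. The fatal one is the \emph{squared} merit function. From $z_k\in\partial^2 f(\hat x_k,\hat v_k)(h_k)$ with $\|z_k\|\le\lambda_k$, condition (6.1) yields only $\kappa\|h_k\|\,\psi'_+\big(d(\hat x_k,(\partial f)^{-1}(y_k))\big)\le\lambda_k$, and to contradict this you need a positive \emph{lower} bound on $\|h_k\|=d(y_k,\partial f(\hat x_k))$; Ekeland gives only the upper bound $\tfrac12\|h_k\|^2\le\theta_k(x_k)$, and $\|h_k\|$ can be arbitrarily small relative to any slope $\lambda_k$ fixed in advance (a lower bound would essentially presuppose the regularity being proved). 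This is exactly why the paper perturbs the \emph{unsquared} residual $\|v-v_i\|$ on the graph: the multiplier is then the unit vector $\frac{\bar v_i-v_i}{\|\bar v_i-v_i\|}$, and after rescaling by $\|h_i\|$ (legitimate since $\tilde N({\rm gph}(\partial f),\cdot)$ is a cone, so $\partial^2 f(x,v)(\cdot)$ is positively homogeneous) the factor $\|h_i\|^2\eta_\psi$ cancels on both sides of (6.1), leaving the clean contradiction $\kappa\le\frac1i$. Your stated motive for squaring — keeping the direction inside $B(0,r)$ — is moot: one applies (6.1) to the rescaled inclusion with direction $h_i$ itself, and $\|h_i\|\to0$ anyway. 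Note also that the paper ties the Ekeland slope to the weight itself through convexity, $\frac{\psi(\frac1i d_i)}{d_i}\le\frac1i\psi'_+\big(d(\bar x_i,(\partial f)^{-1}(v_i))\big)$ via (6.9n); your plan of comparing $\lambda_k$ to $\psi'_+(\tfrac12 D_k)$ through $\psi(t)\le t\psi'_+(t)$ would in fact close with the unsquared distance, but not with the stray factor $\|h_k\|$.

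The second gap is structural: your opening display tacitly assumes $(\partial f)^{-1}(y_k)\ne\emptyset$ with $D_k$ under control, and your assertions $s_k\to0$, $\|h_k\|\le s_k\to0$, and $\hat v_k=y_k+h_k\in B(0,r)$ all depend on it; localizing the preimage, as you suggest, does not repair its possible emptiness. The paper's proof is organized around precisely this difficulty: the Ekeland stage establishes the estimate only for $v\in\partial f(B(\bar x,r_1))\cap B(0,r_1)$, where $v_i\in\partial f(u_i)$ with $u_i\to\bar x$ forces $d(x_i,(\partial f)^{-1}(v_i))\le\|x_i-u_i\|\to0$ automatically, and then a \emph{separate} stage proves the covering $B(0,\delta)\subset\partial f(B[\bar x,r_2])$: one projects $y_k$ onto the closed set $\partial f(B[\bar x,r_2])$ (closed by compactness of the ball and closedness of the graph), reads the projection inequality as $0\in\partial^2 f(a_k,w_k)(w_k-y_k)$, and then (6.1) with $z=0$ plus (3.1) forces $d(a_k,(\partial f)^{-1}(y_k))=0$, i.e.\ $y_k\in\partial f(a_k)$, a contradiction. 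Finally, the chain rule you yourself flag as the main obstacle is never supplied; the paper sidesteps it entirely by applying Ekeland on the product space to $\|v-v_i\|+\delta_{{\rm gph}(\partial f)}(u,v)$ with the norms $\|(u,v)\|_j=\|u\|+\frac1j\|v\|$ and letting $j\to\infty$, so the stationarity condition is computed by a sum rule on the graph and the $v$-perturbation vanishes in the limit. Until you (a) replace the squared distance so homogeneity cancels $\|h_k\|$, (b) add the surjectivity stage, and (c) justify the marginal-function/coderivative rule (with inner semicompactness along $\theta$-attentive sequences), the proposal is a program rather than a proof.
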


\begin{proof}
First we show that there exist $\kappa_1,\tau_1,r_1\in(0,\;+\infty)$ such that
\begin{equation}\label{6.1'}
\psi(\kappa_1 d(x,(\partial f)^{-1}(v))\leq\tau_1d(v,\partial f(x))
\end{equation}
for all $(x,v)\in B(\bar x,r_1)\times(\partial f(B(\bar x,r_1))\cap B(0,r_1))$.
To do this, suppose to the contrary that there exists a sequence $\{(u_i, x_i,v_i)\}\subset \mathbb{R}^n\times\mathbb{R}^n\times\mathbb{R}^n$ such that $(u_i, x_i, v_i)\rightarrow(\bar x,\bar x, 0)$,
$$ v_i\in\partial f(u_i)\;\;{\rm and}\;\;\psi\big(\frac{1}{i}d(x_i,(\partial f)^{-1}(v_i))\big)>id(v_i,\partial f(x_i))\quad\forall i\in\mathbb{N}.$$
Thus,
\begin{equation}\label{6.2'}
0<d(x_i,(\partial f)^{-1}(v_i))\leq\|x_i-u_i\|\rightarrow0,
\end{equation}
and there exists $y_i\in\partial f(x_i)$ such that
\begin{equation}\label{6.3'}
\|v_i-y_i\|<\frac{1}{i}\psi\big(\frac{1}{i}d(x_i,(\partial f)^{-1}(v_i))\big)\leq\frac{1}{i}\psi\big(\frac{1}{i}\|x_i-u_i\|)\rightarrow0.
\end{equation}
Define
$$g_{i}(u,v):=\|v-v_i\|+\delta_{{\rm gph}(\partial f)}(u,v)\quad\forall (u,v)\in \mathbb{R}^n\times\mathbb{R}^n.$$
Then, $g_i$ is lower semicontinuous,  and
$$g_{i}(x_i,y_i)<\inf\limits_{(u,v)\in\mathbb{R}^n\times\mathbb{R}^n}g_{i}(u,v)+\frac{1}{i}\psi\big(\frac{1}{i}d(x_i,(\partial f)^{-1}(v_i))\big).$$
For any $j\in\mathbb{N}$, letting
$$\|(u,v)\|_j:=\|u\|+\frac{1}{j}\|v\|\quad\forall (u,v)\in\mathbb{R}^n\times\mathbb{R}^n,$$
it follows from the Ekeland variational principle that  there exists $(x_{ij},y_{ij})\in{\rm gph}(\partial f)$ such that
\begin{equation}\label{6.2}
\|(x_{ij},y_{ij})-(x_i,y_i)\|_j<\frac{1}{i}d(x_i,(\partial f)^{-1}(v_i)),
\end{equation}
\begin{equation}\label{6.3}
\|y_{ij}-v_i\|=g_{i}(x_{ij},y_{ij})\leq g_{i}(x_i,y_i)=\|y_i-v_i\|
\end{equation}
and
\begin{equation}\label{6.6'}
g_{i}(x_{ij},y_{ij})\leq g_{i}(u,v)+\frac{\psi\big(\frac{1}{i}d(x_i,(\partial f)^{-1}(v_i))\big)}{d(x_i,(\partial f)^{-1}(v_i))}\|(u,v)-(x_{ij},y_{ij})\|_j
\end{equation}
for all $(u,v)\in\mathbb{R}^n\times\mathbb{R}^n$.
Clearly, (\ref{6.2}) and (\ref{6.3}) imply that $\{(x_{ij},y_{ij}\}_{j\in\mathbb{N}}$ is a bounded sequence in $\mathbb{R}^n\times\mathbb{R}^n$. Without loss of generality, we can assume that $(x_{ij},y_{ij})\rightarrow(\bar x_i,\bar v_i)\in{\rm gph}(\partial f)$ as $j\rightarrow\infty$ (passing to a subsequence if necessary). It follows from (\ref{6.2})---(\ref{6.6'}) that
$$
\|\bar x_i-x_i\|\leq \frac{1}{i}d(x_i,(\partial f)^{-1}(v_i)),\;\;\|\bar v_i-v_i\|\leq\|y_i-v_i\|
$$
and
\begin{equation}\label{6.7}
\|\bar v_i-v_i\|\leq\|v-v_i\|+\delta_{{\rm gph}(\partial f)}(u,v)+\frac{\psi\big(\frac{1}{i}d(x_i,(\partial f)^{-1}(v_i))\big)}{d(x_i,(\partial f)^{-1}(v_i))}\|u-\bar x_i\|
\end{equation}
for all $(u,v)\in\mathbb{R}^n\times\mathbb{R}^n$. Hence, by (\ref{6.2'}), (\ref{6.3'}) and $(x_i,v_i)\rightarrow(\bar x,0)$, one has
\begin{equation}\label{6.9n}
0<d(x_i,(\partial f)^{-1}(v_i)))\leq\frac{i}{i-1}d(\bar x_i,(\partial f)^{-1}(v_i))
\end{equation}
and
$$\bar v_i\not=v_i\;\;{\rm and}\;\;(\bar x_i,\bar v_i)\rightarrow(\bar x,0).$$
It follows from (\ref{6.9n}) and the convexity of $\psi$ that
$$0<\frac{\psi\big(\frac{1}{i}d(x_i,(\partial f)^{-1}(v_i))\big)}{\frac{1}{i}d(x_i,(\partial f)^{-1}(v_i))}
\leq \frac{\psi\big(\frac{1}{i-1}d(\bar x_i,(\partial f)^{-1}(v_i))\big)}{\frac{1}{i-1}d(\bar x_i,(\partial f)^{-1}(v_i))}
\leq\psi'_+(d(\bar x_i,(\partial f)^{-1}(v_i)))$$
for all $i>1$. This and  (\ref{6.7}) imply that
$$\|\bar v_i-v_i\|\leq\|v-v_i\|+\delta_{{\rm gph}(\partial f)}(u,v)+\frac{1}{i}\psi'_+\big(d(\bar x_i,(\partial f)^{-1}(v_i))\big)\|u-\bar x_i\|$$
for all $(u,v)\in\mathbb{R}^n\times\mathbb{R}^n$. Hence,
\begin{eqnarray*}
(0,0)&\in&\{0\}\times\partial\|\cdot-v_i\|(\bar v_i)+\partial\delta_{{\rm gph}(\partial f)}(\bar x_i,\bar v_i)+\frac{1}{i}\psi'_+\big(d(\bar x_i,(\partial f)^{-1}(v_i))\big)B_{\mathbb{R}^n}\times\{0\}\\
&\subset&\{0\}\times\big\{\frac{\bar v_i-v_i}{\|\bar v_i-v_i\|}\big\}+\tilde N({\rm gph}(\partial f),(\bar x_i,\bar v_i))+\frac{1}{i}\psi'_+\big(d(\bar x_i,(\partial f)^{-1}(v_i))\big)B_{\mathbb{R}^n}\times\{0\},
\end{eqnarray*}
and so there exists $x_i^*\in B_{\mathbb{R}^n}$ such that
$$\frac{1}{i}\psi'_+\big(d(\bar x_i,(\partial f)^{-1}(v_i))\big)x_i^*\in \partial^2 f(\bar x_i,\bar v_i)\left(\frac{\bar v_i-v_i}{\|\bar v_i-v_i\|}\right).$$
Let $h_i:=\bar v_i-v_i$. Then, $v_i=\bar v_i-h_i$,
$$z_i:=\frac{1}{i}\|h_i\|\eta_{\psi}(\bar x_i,\bar v_i)(h_i)x_i^*=\frac{1}{i}\|h_i\|\psi'_+\big(d(\bar x_i,(\partial f)^{-1}(\bar v_i-h_i))\big)x_i^*\in\partial^2f(\bar x_i,\bar v_i)(h_i)$$
and so
$$
\langle z_i,h_i\rangle=\frac{1}{i}\|h_i\|\eta_{\psi}(\bar x_i,\bar v_i)(h_i)\langle x_i^*,h_i\rangle
\leq\frac{1}{i}\|h_i\|^2\eta_{\psi}(\bar x_i,\bar v_i)(h_i).
$$
Noting that $0<\psi'_+\big(d(\bar x_i,(\partial f)^{-1}(\bar v_i-h_i))$, it follows from (\ref{6.1})  that $\kappa\leq\frac{1}{i}$ for all sufficiently large $i$, a contradiction. Therefore, there exist $\kappa_1,\tau_1,r_1\in(0,\;+\infty)$ such that
(\ref{6.1'}) holds for all $(u,v)\in B(\bar x,r_1)\times(\partial f(B(\bar x,r_1))\cap B(0,r_1))$. Let $r_2\in(0,\;r_1)$. We claim that there exists $\delta\in(0,\;r_2)$ such that   $B(0,\delta)\subset \partial f(B[\bar x,r_2])$. Granting this, one has
$$B(\bar x,\delta)\times B(0,\delta)\subset B(\bar x,r_1)\times (\partial f(B(\bar x,r_1)\cap B(0,r_1)).$$
This  and (\ref{6.1'}) imply that $\partial f$ is metrically $\psi$-regular at $(\bar x,0)$. It remains to show that there exists $\delta\in(0,\;r_2)$ such that   $B(0,\delta)\subset \partial f(B[\bar x,r_2])$.
Indeed, if this is not the case, there exists a sequence $\{y_k\}$ converging to 0 such that each $y_k\not\in\partial f(B[\bar x,r_2])$.
Noting that $\partial f(B[\bar x,r_2])$ is closed (thanks to the compactness of  $B[\bar x,r_2]$ and the closedness of ${\rm gph}(\partial f)$), there exists $w_k\in\partial f(B[\bar x,r_2])$ such that
\begin{equation}\label{6.9'}
0<\|y_k-w_k\|=d(y_k,\partial f(B[\bar x,r_2]))\leq \|y_k\|\rightarrow0,
\end{equation}
and so $w_k\rightarrow0$. It follow from (\ref{6.1'}) that
$$\psi(\kappa_1d(\bar x,(\partial f)^{-1}(w_k)))\leq\tau_1d(w_k,\partial f(\bar x))\leq\tau_1\|w_k\|\rightarrow0$$
Hence, $\kappa_1d(\bar x,(\partial f)^{-1}(w_k))\rightarrow 0$ and so
there exists $a_k\in(\partial f)^{-1}(w_k)$ such that $a_k\rightarrow\bar x$. On the other hand, the equality of (\ref{6.9'}) means
$$\langle y_k-w_k,y-w_k\rangle\leq\frac{1}{2}\|y-w_k\|^2\quad\forall y\in\partial f(B[\bar x,r_2]).$$
Hence
$$\langle (0,y_k-w_k),(x,y)-(a_k,w_k)\rangle\leq\frac{1}{2}\|(x,y)-(a_k,w_k)\|^2$$
for all $(x,y)\in {\rm gph}(\partial f)\cap (B[\bar x,r_2]\times\mathbb{R}^n)$. Since $(a_k,w_k)$ is an interior point of $B[\bar x,r_2]\times\mathbb{R}^n$ for all $k$ large enough, $(0,y_k-w_k)\in \tilde N({\rm gph}(\partial f),(a_k,w_k))$, namely
$0\in\partial^2f(a_k,w_k)(w_k-y_k)$. It follows from (\ref{6.1}) that
$$\kappa\|y_k-w_k\|^2\eta_{\psi}(a_k,w_k)(w_k-y_k)\leq\langle 0,y_k-w_k\rangle=0.$$
By the first inequality of (\ref{6.9'}), one has
$$0=\eta_{\psi}(a_k,w_k)(w_k-y_k)=\psi'_+(d(a_k,(\partial f)^{-1}(y_k))).$$
This and (\ref{3.1}) imply that  $d(a_k,(\partial f)^{-1}(y_k))=0$, and so $y_k\in\partial f(a_k)$. This contradicts that $a_k\rightarrow\bar x$ and $y_k\not\in\partial f(B[\bar x,r_2])$. The proof is complete.
\end{proof}

Note that  $\partial f$ is closed if $f$ is convex or continuous.
The following corollary is immediate from Proposition 6.1 and Corollary 5.3.
\begin{corollary}
Let $\psi$ be a convex admissible function and let $f$ be a proper lower semicontinuous convex function on $\mathbb{R}^n$. Let $\bar x$ be a minimizer of $f$  and suppose that there exist $\kappa,r\in(0,\;+\infty)$ such that (\ref{6.1}) holds
for all $(x,v,h)\in({\rm gph}(\partial f)\times\mathbb{R}^n)\cap (B(\bar x,r)\times B(0,r)\times B(0,r))$ and $z\in\partial^2f(x,v)(h)$.
Then, $f$ has $\varphi$-{\it SLWP} at $\bar x$ with $\varphi(t):=\int\limits_{0}^t\psi(t)dt$.
\end{corollary}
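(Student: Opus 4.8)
The strategy is to recognize this as a direct chaining of Proposition 6.1 and the convex case of Corollary 5.3, once one checks that the function $\varphi(t):=\int_0^t\psi(s)\,ds$ legitimately falls under the hypotheses of Corollary 5.3 and satisfies $\varphi'=\psi$. Concretely: Proposition 6.1 will deliver metric $\psi$-regularity of $\partial f$ at $(\bar x,0)$, which (since $\varphi'=\psi$) is statement (vi) of Corollary 5.3; and under convexity Corollary 5.3 gives (vi)$\Rightarrow$(i), i.e.\ $\varphi$-{\it SLWP}.

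First I would verify that $\varphi$ is a differentiable, strictly convex admissible function with $\varphi'(0)=0$ and $\varphi'=\psi$. Since $\psi$ is convex with $\psi(0)=0$, testing convexity against the endpoints $0$ and $s$ yields $\psi(t)\le\frac{t}{s}\psi(s)$ for $0<t<s$, so $\psi(t)\to0$ as $t\to0^+$ and $\psi$ is continuous on $\mathbb{R}_+$; by the fundamental theorem of calculus $\varphi$ is then $C^1$ with $\varphi'=\psi$ and $\varphi'(0)=\psi(0)=0$. Admissibility of $\psi$ forces $\psi(t)>0$ for every $t>0$ (otherwise, being nondecreasing and nonnegative, $\psi$ would vanish on a nondegenerate interval, and a constant sequence inside it would violate the forcing condition $[\psi(t)\to0\Rightarrow t\to0]$), whence $\varphi(t)=\int_0^t\psi(s)\,ds>0$ for $t>0$ and $\varphi$ is itself admissible. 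Finally, (\ref{3.1}) gives $\psi'_+>0$ on $(0,\infty)$, so for $0<t_1<t_2$ convexity yields $\psi(t_2)\ge\psi(t_1)+\psi'_+(t_1)(t_2-t_1)>\psi(t_1)$; thus $\varphi'=\psi$ is (strictly) increasing and, by the characterization recalled in Section 2, $\varphi$ is strictly convex.

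Next I would invoke Proposition 6.1. Because $f$ is convex, ${\rm gph}(\partial f)$ is closed (as noted preceding the statement), and since $\bar x$ minimizes the convex function $f$ we have $0\in\partial f(\bar x)$, i.e.\ $(\bar x,0)\in{\rm gph}(\partial f)$. The standing assumption (\ref{6.1}) is exactly the inequality required by Proposition 6.1, so $\partial f$ is metrically $\psi$-regular at $(\bar x,0)$; as $\varphi'=\psi$, this is precisely statement (vi) of Corollary 5.3. Since $\varphi$ meets the hypotheses of Corollary 5.3 and $f$ is convex, all six statements there are equivalent, so in particular (vi)$\Rightarrow$(i): $f$ has $\varphi$-{\it SLWP} at $\bar x$. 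The only nonroutine point is the verification in the second paragraph that the passage from $\psi$ to $\varphi=\int_0^\cdot\psi$ preserves admissibility and upgrades convexity to strict convexity with $\varphi'=\psi$; everything afterward is a mechanical application of the two quoted results.
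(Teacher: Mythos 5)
Your proposal is correct and takes essentially the same route as the paper, which simply declares the corollary ``immediate from Proposition 6.1 and Corollary 5.3'': Proposition 6.1 yields metric $\psi$-regularity of $\partial f$ at $(\bar x,0)$, and the convex case of Corollary 5.3 gives the implication (vi)$\Rightarrow$(i). Your second paragraph merely makes explicit the routine verification the paper leaves implicit, namely that $\varphi(t)=\int_0^t\psi(s)\,ds$ is a differentiable, strictly convex admissible function with $\varphi'=\psi$ and $\varphi'(0)=0$, so that $\varphi$ indeed satisfies the hypotheses of Corollary 5.3.
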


In the special case when $\psi(t)=t$, $\eta_{\psi}(x,v)(h)\equiv1$, and so (\ref{6.1}) means the positive definiteness of $\partial^2f(x,v)$. It is worth mentioning that under the assumption that $f$ is prox-regular and subdifferentially continuous at $(\bar x,0)$,  the positive definiteness of $\partial^2 f(\bar x,0)$ is equivalent to that $\bar x$ is a stable second order local minimizer of $f$ (cf. \cite{DMN,MN1,MN2,poliquin and rockafellar}). In the finite dimension setting, we note that the positive definiteness of $\partial^2 f(\bar x,0)$ is equivalent to  the positive definiteness of $\partial^2 f(x,v)$ for all $(x,v)\in{\rm gph}(\partial f)$ close to $(\bar x,0)$.
We conclude with the following questions:\\
1) Under some assumption similar to the prox-regularity and subdifferential continuity, does ``generalized positive definiteness" in the sense of (\ref{6.1}) imply that $\partial f$ is strongly metrically $\psi$-regular at $(\bar x,0)$ ?\\
2) If $f$ is a proper lower semicontinuous convex  function, is (\ref{6.1}) a necessary condition for $f$ to have $\varphi$-{\it SLWP} at $\bar x$ with  $\varphi(t)=\int\limits_{0}^t\psi(t)$?

\end{document}